\documentclass[12pt]{amsart}
\usepackage{amscd,amssymb,graphics}

\usepackage{amsfonts}
\usepackage{amsmath}
\usepackage{amsxtra}
\usepackage{latexsym}
\usepackage[mathcal]{eucal}
\usepackage[all]{xy}
\usepackage[T1]{fontenc}
\usepackage{lmodern}

\usepackage{graphics,colortbl}

\input xy
\xyoption{all}
\usepackage{epsfig}

\usepackage[pdftex,bookmarks,colorlinks,breaklinks]{hyperref}

\newtheorem{theorem}{Theorem}[section]

\newtheorem{proposition}[theorem]{Proposition}
\newtheorem{corollary}[theorem]{Corollary}
\newtheorem{lemma}[theorem]{Lemma}
\newtheorem{claim}
{Claim}

\theoremstyle{definition}
\newtheorem{definition}[theorem]{Definition}

\newtheorem{question}[theorem]{Question}
\theoremstyle{remark}

\newtheorem{example}[theorem]{Example}
\newtheorem{remark}[theorem]{Remark}

\usepackage{vmargin}
\setmarginsrb{20mm}{10mm}{20mm}{16mm}%
{20mm}{7mm}{20mm}{8mm}
\catcode`\@=12

\def\Ne{{\mathcal N}}

\def\U{{\mathcal U}}

\def\UT{\mathbf{UT}}
\def\NT{\mathbf{NT}}

\def\f{\operatorname{fi}}

\def\add{\operatorname{add}}
\def\cof{\operatorname{cof}}
\def\lh{\operatorname{lh}}
\def\sym{\operatorname{sym}}

\def\Sp{\operatorname{Spec}}

\input xy  \xyoption{all}

\begin{document}
	
	
	
	
	\title[Cofinal types of topological groups]{Cofinal types of topological groups}

\author[X. Gong]{Xuan Gong}
 \address[X. Gong]
{Institute of Mathematics, Nanjing Normal University, Nanjing 210024, China}
\email{gongxuan2021@163.com}

\author[D. Peng]{Dekui Peng\textsuperscript{*}}
 \address[D. Peng]
{Institute of Mathematics, Nanjing Normal University, Nanjing 210024, China}
\email{pengdk10@lzu.edu.cn}
\thanks{*Corresponding author.}

\begin{abstract}
We investigate the local structure of topological groups via Tukey order and introduce the \emph{fineness index} $\f(P)$ for directed posets. This invariant generalizes the bounding number $\mathfrak b$ and gives the character bound $\chi(G)\in\{1,\omega\}\cup[\f(P),\cof(P)]$ for every topological group $G$ with a $P$-base. We also prove that $\mathcal N_e(F(X,\mathcal U))=_T\mathcal U^\omega$ for every non-discrete $\add(\mathcal U)$-precompact uniform space $(X,\mathcal U)$. In particular, this applies to every infinite compact space. Finally, under Shelah's Strong Hypothesis (\textbf{SSH}), we show that both $\mathcal N_e(F(X,\mathcal U))$ and $\mathcal N_e(A(X,\mathcal U))$ are Tukey equivalent to $[\tau]^{<\omega}$ whenever $(X,\mathcal U)$ is an infinite precompact uniform space of weight $\tau$, $\cof(\tau)>\omega$, and $\tau\ge\mathfrak d$.
\end{abstract}

\subjclass[2020]{54H11, 03E04, 54A25} 

\keywords{Cofinal type, free topological group, Tukey order, cofinality, character of topological groups}
	\maketitle	
\section{Introduction}

Throughout the paper, all topological groups are assumed to be Hausdorff. Uniform spaces are separated, as in condition (iv) below, and compact spaces are assumed to be Hausdorff.

Neighborhood bases at the identity of topological groups connect general topology, topological algebra, and combinatorial set theory. Given a topological group $G$, the neighborhood filter at the identity, denoted by $\mathcal{N}_e(G)$ and ordered by reverse inclusion, forms a directed partially ordered set (poset). Its Tukey type provides a natural way to compare local structures of topological groups. For two directed posets $P$ and $Q$, we say that $P$ is Tukey below $Q$, denoted by $P \le_T Q$, if there exists a map from $Q$ to $P$ carrying cofinal sets to cofinal sets.

The local structure of non-metrizable topological groups has received considerable attention, particularly for groups admitting a $P$-base (i.e., when there exists a cofinal, order-preserving map from a directed poset $P$ to $\mathcal{N}_e(G)$); see, for example, \cite{DF, Feng, Huang, KM}. An important instance is the class of topological groups with an $\omega^\omega$-base \cite{Ban, BL, GKL, LPT, LRZ}. Following work on uniform spaces by Cascales and Orihuela \cite{CO}, Gabriyelyan, K\k{a}kol, and Leiderman \cite{GKL} showed that the character $\chi(G)$ of a non-metrizable group with an $\omega^\omega$-base belongs to the interval $[\mathfrak{b}, \mathfrak{d}]$. Leiderman, Pestov, and Tomita \cite{LPT} subsequently showed that the lower bound can be attained by constructing groups whose neighborhood bases are Tukey equivalent to prescribed regular cardinals Tukey below $\omega^\omega$.

The first objective of this paper is to define a cardinal invariant for general directed posets that gives lower bounds for character and pseudocharacter. We introduce the \emph{fineness index}, denoted by $\f(P)$, which is the first cardinal at which domination by countable subsets fails. We prove that every topological group $G$ with a $P$-base satisfies $\chi(G) \in \{1, \omega\} \cup [\f(P), \cof(P)]$, extending the corresponding bound for $\omega^\omega$.

The second objective concerns the exact cofinal types of free topological groups. Nickolas and Tkachenko \cite{NT,NT2} developed uniform covering trees to compute the character of free topological groups over $\omega$-narrow spaces, including compact spaces, and Chiș, Ferrer, Hern\'{a}ndez, and Tsaban \cite{CFHT} obtained related extensions and pcf evaluations. In particular, the character equality $\chi(F(X, \mathcal{U})) = \cof(\mathcal{U}^\omega)$ is known. We ask whether this cardinal equality can be strengthened to the Tukey equivalence $\mathcal{N}_e(F(X, \mathcal{U})) =_T \mathcal{U}^\omega$.

We give a positive answer for a broad class of uniform spaces. Using a levelwise refinement of uniform covering trees, called \emph{neat trees}, we prove that $\mathcal{N}_e(F(X, \mathcal{U})) =_T \mathcal{U}^\omega$ for every non-discrete $\operatorname{add}(\mathcal{U})$-precompact uniform space $(X, \mathcal{U})$. In particular, the equivalence holds for every infinite compact space. The new point here is the Tukey equivalence itself, rather than the previously known equality of characters.

Finally, we invoke Shelah's Strong Hypothesis ({\bf SSH}). Arising from pcf (possible cofinalities) theory, {\bf SSH} asserts that the pseudo-power $pp(\lambda) = \lambda^+$ for every singular cardinal $\lambda$. Utilizing {\bf SSH}, we prove that for an infinite precompact uniform space of weight $\tau$ with $\cof(\tau) > \omega$ and $\tau\ge\mathfrak{d}$, both $\mathcal{N}_e(F(X, \mathcal{U}))$ and $\mathcal{N}_e(A(X, \mathcal{U}))$ are Tukey equivalent to $[\tau]^{<\omega}$.
	\label{sec0}

	\subsection{Tukey order of (pre-)ordered sets}
	
	Let $P$ be a set. A \emph{quasi-order} (or \emph{pre-order}) $\leq$ on $P$ is a reflexive and transitive binary relation. If $\leq$ is also antisymmetric, then it is called a \emph{partial order}. In this case, the pair $(P,\leq)$ is called a \emph{partially ordered set} (briefly, a \emph{poset}). 
	
	A pre-ordered set $(P,\leq)$ is called \emph{directed} if for every $x,y\in P$ there exists $z\in P$ such that $x\leq z$ and $y\leq z$. 
	
	For $A,B\subseteq P$, we say that $A$ is \emph{dominated} by $B$ if for every $a\in A$ there exists $b\in B$ such that $a\leq b$.
	
	Let $(P,\leq)$ be a directed pre-ordered set. A subset $A\subseteq P$ is called
	\begin{itemize}
		\item \emph{cofinal} in $P$ if $P$ is dominated by $A$;
		\item \emph{bounded} in $P$ if $A$ is dominated by a single point of $P$.
	\end{itemize}
	
	We associate two cardinals with $P$:
	\begin{itemize}
		\item the \emph{cofinality} 
		\[
		\cof(P)=\min\{\,|A|: A \text{ is cofinal in } P\,\};
		\]
		\item the \emph{additivity} 
		\[
		\add(P)=\min\{\,|A|: A \text{ is unbounded in } P\,\}.
		\]
	\end{itemize}
	
	Note that a pre-ordered set may have no nonempty unbounded subset; this happens precisely when $P$ has a (not necessarily unique) largest element. In this case, we put $\add(P)=1$, which is equivalent to $\cof(P)=1$.
	
	It is easy to see that if $\add(P)\neq 1$, then $\add(P)$ is a regular cardinal and $\add(P)\leq \cof(P)$. The well-known cardinal characteristics $\mathfrak{b}$ and $\mathfrak{d}$ are precisely the additivity and the cofinality of $(\omega^\omega,\leq^*)$, respectively, where $f\leq^* g$ means that $f(n)\leq g(n)$ for all but finitely many $n\in\omega$.
	
	We now introduce a central notion in the theory of directed posets, due to Tukey \cite{Tuk}. Let $(P,\leq_P)$ and $(Q,\leq_Q)$ be directed pre-ordered sets. We say that $Q$ is \emph{Tukey above} $P$, or equivalently that $P$ is \emph{Tukey below} $Q$, and write $P\leq_T Q$, if there exists a \emph{cofinal map} $\varphi:Q\to P$, that is, a map which sends cofinal subsets of $Q$ to cofinal subsets of $P$. 
	
	By a characterization of Schmidt \cite{Sch, Tod}, this is equivalent to the existence of an \emph{unbounded map} $\psi:P\to Q$, that is, a map sending unbounded subsets of $P$ to unbounded subsets of $Q$.
	
	If $P$ is Dedekind complete (i.e., every bounded subset of $P$ has a least upper bound), then $P\leq_T Q$ if and only if there exists an order-preserving map $f:Q\to P$ such that $f(Q)$ is cofinal in $P$ \cite{GartM}.
	
	Two posets $P$ and $Q$ are said to be \emph{Tukey equivalent}, written $P=_T Q$, if $P\leq_T Q$ and $Q\leq_T P$. A \emph{cofinal type} is a Tukey equivalent class.
	We list several well-known examples, which will also be used throughout the paper, to better understand the Tukey order.
	
	\begin{example}\label{ex}\cite{Mam}
		In the following, $P$ and $Q$ are directed pre-ordered sets.
		
		\begin{itemize}
			\item[(i)] (folklore) The only countable cofinal types are $1$ and $\omega$.\footnote{The classification of cofinal types of cardinality at most $\omega_1$ is sensitive to additional set-theoretic assumptions; see \cite{Isb, Tod}.}
			
			\item[(ii)]
			$
			\cof(\cof(P))=_T\cof(P)\le_T P
			\quad\text{and}\quad
			\add(P)\le_T P.
			$
			
			\item[(iii)] 
			$P =_T Q$ if and only if both can be embedded as cofinal subsets of a common directed preordered set. In particular, every cofinal subset of $P$ is Tukey equivalent to $P$.
			
			\item[(iv)] 
			The product $P\times Q$ is the least upper bound of $\{P,Q\}$ in the Tukey order. Hence the collection of cofinal types forms an upper semilattice.
			
			\item[(v)] 
			If $\kappa$ is an uncountable cardinal and $\cof(P)\le \kappa$, then
			\[
			P\le_T ([\kappa]^{<\omega},\subseteq).
			\]
			
			\item[(vi)] If $\kappa<\add(P)$, then $P^\kappa=_T P$.
		\end{itemize}
		
	\end{example}
	
	\subsection{Uniform spaces}
	Let $X$ be a set and let $A$ and $B$ be subsets of $X\times X$, i.e., relations on the set $X$. Let 
	\[
	-A=\{(x,y):(y,x)\in A\}
	\]
	and 
	\[
	A+B=\{(x,z): \text{there exists a } y\in X \text{ such that }(x,y)\in A \text{ and }(y,z)\in B \}.
	\]
	For a relation $A\subseteq X\times X$ and a natural number $n$ the relation $nA\subseteq X \times X$ is defined inductively by the formulas
	\[
	1A=A \text{ and }nA=(n-1)A+A.
	\]
	
	The diagonal of the Cartesian product $X\times X$ is $\Delta=\{(x,x): x\in X\}$. Every set $V\subseteq X\times X$ that contains $\Delta$ and satisfies the condition $V=-V$ is called an entourage of the diagonal; the family of all entourages of the diagonal $\Delta $ will be denoted by $\mathcal{D}_X$. If for a pair $x,y\in X$ and a $V\in \mathcal{D}_X$ we have $(x,y)\in V$, we write $|x-y|<V$; otherwise we write $|x-y|\ge V$. 
	
	Let $x_0 \in X$ and $V\in \mathcal{D}_X$. The set $B(x_0,V)=\{x\in X: |x_0-x|< V\}$ is called the $V$-ball about $x_0$. 
	
	A uniformity on a set $X$ is a subfamily $\mathcal{U}$ of $\mathcal{D}_X$ which satisfies the following conditions:
	\begin{itemize}
		\item[(i)] If $V\in \mathcal{U}$ and $V\subseteq U\in \mathcal{D}_X$, then $U\in \mathcal{U}$.
		\item[(ii)] If $V_1,V_2\in \mathcal{U}$, then $V_1 \cap V_2 \in \mathcal{U}$.
		\item[(iii)] For every $V\in \mathcal{U}$ there exists a $U\in \mathcal{U}$ such that $2U\subseteq V$.
		\item[(iv)] $\cap \mathcal{U}=\Delta$.
	\end{itemize}
	
	A family $\mathcal{B}\subseteq \mathcal{U}$ is called a base for the uniformity $\mathcal{U}$ if for every $V\in \mathcal{U}$ there exists a $W\in \mathcal{B}$ such that $W\subseteq V$.
	\[
	w(\mathcal{U})=\text{min} \{|\mathcal{B}|:\mathcal{B} \text{ is a base for }\mathcal{U}\}
	\]
	is called the weight of the uniformity $\mathcal{U}$.
	
	A uniform space is a pair $(X,\mathcal{U})$ consisting of a set $X$ and a uniformity $\mathcal{U}$ on $X$. The weight of a uniform space $(X,\mathcal{U})$ is defined as the weight of the uniformity $\mathcal{U}$.
	
	Any cover of the uniform space $(X,\mathcal{U})$ which has a refinement of the form $\{B(x,V):x\in X\}$, where $V\in \mathcal{U}$, is called a uniform cover.
	
	\subsection{Free Topological Groups over Uniform Spaces}
	Let $(X, \mathcal{U})$ be a uniform space. The free topological group (respectively, free Abelian topological group) over $(X, \mathcal{U})$, denoted by $F(X, \mathcal{U})$ (respectively, $A(X, \U)$), is the free group (respectively, free Abelian group) generated by $X$, endowed with the finest group topology such that every uniformly continuous map from $(X, \mathcal{U})$ into a topological group (respectively, an Abelian topological group), equipped with its natural two-sided uniformity, uniquely extends to a continuous group homomorphism.

This framework naturally generalizes the classical Markov free topological group over a Tychonoff space $X$. Indeed, if $\mathcal{U}_X$ denotes the fine uniformity on $X$, then $F(X)$ is topologically isomorphic to $F(X, \mathcal{U}_X)$. For any compatible uniformity $\mathcal{U}$ on $X$, the relation $\mathcal{U} \subseteq \mathcal{U}_X$ implies that the identity map induces a continuous isomorphism from $F(X)$ onto $F(X, \mathcal{U})$; hence, the topology of $F(X, \mathcal{U})$ is coarser than or equal to that of $F(X)$. The parallel statements hold for free Abelian topological groups.

The uniform-space setting provides additional structure for studying the identity neighborhood filter. In particular, the entourages and uniform covers of $\mathcal U$ allow neighborhood bases to be organized by uniform covering trees \cite{NT,NT2}.

	\section{Fineness Index of Posets and \texorpdfstring{$P$}{P}-Base}
	
	\subsection{Fineness index: definition and basic computation}
	For a topological space $X$ and $x\in X$, we denote by $\Ne_x(X)$ the neighbourhood filter at $x$, ordered by reverse inclusion.
	
	Let $P$ be a directed poset. Following Dow and Feng \cite{DF}, we say that a topological group $G$ has a $P$-base if there exists an order-preserving map from $P$ to $\Ne_e(G)$ with cofinal image. Clearly, this is stronger than the condition $\Ne_e(G)\le_T P$. Since $\Ne_e(G)$ is always Dedekind complete, i.e., every bounded subset has a least upper bound, the two notions coincide; see \cite{GartM}.
	
	Among non-metrizable cases, the most extensively studied instance is $P=\omega^\omega$; see \cite{Ban, Feng, GKL,LPT}. Such groups are said to have a $\mathfrak G$-base. One defines uniform spaces with an $\omega^\omega$-base similarly. Such spaces were first studied in \cite{CO}, where, among other things, every compact space with an $\omega^\omega$-base was shown to be metrizable. For further results on topological spaces, groups, and uniform spaces with an $\omega^\omega$-base, we refer the reader to \cite{Ban}.
	
	Topological groups with an $\omega^\omega$-base were systematically studied in \cite{GKL} by Gabriyelyan,~K\k{a}kol and Leiderman. Among other results, it was shown that if a topological group admits an $\omega^\omega$-base, then its character, namely the cofinality of $\Ne_e(G)$, belongs to
	\[
	\{1,\omega\}\cup[\mathfrak b,\mathfrak d].
	\]

	This result may seem somewhat unexpected, since $\mathfrak b$ is not a cardinal invariant arising directly from $\omega^\omega$, but rather from $(\omega^\omega,\le^*)$, while the additivity of $\omega^\omega$ is countable. It is therefore natural to look for another intrinsically defined cardinal invariant of posets that can serve as a lower bound for non-metrizable topological groups with a $P$-base. The following notion provides such a tool.
	
	\begin{definition}
		A directed pre-ordered set $(P,\leq)$ is called \emph{$\kappa$-fine} if every subset 
		$A\subseteq P$ with $|A|\le \kappa$ is dominated by a countable subset of $P$.
		
		When $\cof(P)>\omega$, let $\f(P)$ denote the least cardinal $\lambda$ such that $P$ is not $\lambda$-fine. This cardinal is called the \emph{fineness index} of $P$.
		
		For completeness, we set $\f(P)=1$ if $\cof(P)=1$, and $\f(P)=\omega$ if $\cof(P)=\omega$.
	\end{definition}

	The definition is well posed. Indeed, every directed pre-ordered set is $\omega$-fine, since each countable subset dominates itself. On the other hand, if $\cof(P)>\omega$ and $A\subseteq P$ is cofinal with $|A|=\cof(P)$, then $A$ cannot be dominated by a countable subset of $P$. Consequently,
	$
	\omega_1\leq\f(P)\leq\cof(P)
	$
	whenever $\cof(P)>\omega$.
	
	\begin{remark}
		The cardinal invariant $\f(P)$ is closely related to $\add(P)$: informally, one obtains $\f(P)$ from the definition of $\add(P)$ by replacing ``$1$'' with ``$\omega$'' (when $\cof(P)>\omega$). Indeed,  $\add(P)$ is the least cardinality of a subset of $P$ that cannot be dominated by a subset of cardinality $1$.
		
		The notion of $\kappa$-fineness was first introduced for topological groups by He, Tkachenko, Zhang and the second-listed author \cite{HPTZ}: a topological group $G$ is called $\kappa$-fine if $\Ne_e(G)$ is $\kappa$-fine. The invariant $\f(G)$ was then introduced naturally in subsequent work \cite{ZHX}. The weakest such property, namely $\omega_1$-fineness, already deserves special attention; see, for example, \cite{LRS}, where $\omega_1$-filters are called \emph{$A$-filters}.
		
		In these papers, $\kappa$-fineness is shown to be closely related to an important topic in the theory of topological groups, namely \emph{$\mathbb R$-factorizable topological groups} \cite[Chap. 8]{AT}. 
	\end{remark}
	It is straightforward to see that if $P$ and $Q$ are Dedekind complete directed posets and $P\le_T Q$, then the $\kappa$-fineness of $Q$ implies the $\kappa$-fineness of $P$. The next lemma shows that the assumption of Dedekind completeness can in fact be omitted.
	
	\begin{lemma}\label{Tukey}
		Let $\kappa$ be uncountable, and let $P$ and $Q$ be directed posets such that $P\le_T Q$. If $Q$ is $\kappa$-fine, then so is $P$.
	\end{lemma}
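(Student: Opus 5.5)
The plan is to work directly with a cofinal map $\varphi:Q\to P$, which exists by the definition of $P\le_T Q$. The key observation is a pointwise reformulation of cofinality.

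\emph{Claim:} for every $p\in P$ there is $q_p\in Q$ such that $\varphi(q')\ge p$ for all $q'\ge q_p$.

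I would prove this by contradiction. Suppose some $p$ has no such $q_p$. Then for every $q\in Q$ we may choose $q'\ge q$ with $\varphi(q')\not\ge p$. The set $C_p$ of all such chosen $q'$ is cofinal in $Q$, since it dominates every $q$. On the other hand, no element of $\varphi(C_p)$ lies above $p$, so $\varphi(C_p)$ is not cofinal in $P$. This contradicts the assumption that $\varphi$ sends cofinal sets to cofinal sets.

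With the claim in hand, the fineness transfer is immediate. Let $A\subseteq P$ with $|A|\le\kappa$. For each $a\in A$ fix $q_a$ as in the claim, and put $B=\{q_a:a\in A\}$, so that $|B|\le\kappa$. Since $Q$ is $\kappa$-fine, there is a countable $C\subseteq Q$ dominating $B$. Set $D=\varphi(C)$, which is countable. For each $a\in A$ pick $c\in C$ with $c\ge q_a$. By the choice of $q_a$ we get $\varphi(c)\ge a$, so $D$ dominates $A$. Hence $P$ is $\kappa$-fine.

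The only step needing care is the claim, since the definition of a cofinal map is global, about sets, while we need a local witness for each point of $P$. The argument above handles this using only directedness-free reasoning and the axiom of choice. Neither Dedekind completeness nor the uncountability of $\kappa$ is used essentially; the latter only keeps the statement within the range where $\kappa$-fineness is non-trivial.
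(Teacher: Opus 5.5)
Your proof is correct, but it takes a different route from the paper.

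The paper starts from Schmidt's characterization and takes an \emph{unbounded} map $\varphi:P\to Q$. It dominates $\varphi(A)$ by a countable set $\{q_n:n\in\omega\}$ and splits $A$ into the pieces $A_n=\{a\in A:\varphi(a)\le q_n\}$. Unboundedness then forces each $A_n$ to be bounded in $P$, and the paper chooses an upper bound $b_n$ for each piece.

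You work instead with the \emph{cofinal} map $\varphi:Q\to P$ given by the definition. Your local-witness claim is exactly the half of Schmidt's characterization needed here. Indeed, $p\mapsto q_p$ is itself an unbounded map $P\to Q$, and it has the stronger Galois--Tukey property that $q_a\le c$ implies $a\le\varphi(c)$. That property lets you skip the decomposition into the $A_n$ and the choice of the bounds $b_n$: the countable dominating family is simply $\varphi(C)$.

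So your argument is self-contained, with no appeal to the equivalence of cofinal and unbounded maps, and it produces an explicit dominating set. The paper's version is shorter once that equivalence is taken as known. Neither argument uses directedness or the uncountability of $\kappa$.
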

	
	\begin{proof}
		Fix a map $\varphi:P\to Q$ sending unbounded subsets of $P$ to unbounded subsets of $Q$.
		Let $A\subseteq P$ with $|A|\le \kappa$. Since $Q$ is $\kappa$-fine, the set $\varphi(A)$ is dominated by a countable set $\{q_n:n\in\omega\}$ in $Q$.
		
		For each $n\in\omega$, define
		$
		A_n=\{a\in A:\varphi(a)\le q_n\}.
		$
		Then $\varphi(A_n)$ is bounded in $Q$, and by the choice of $\varphi$, the set $A_n$ must be bounded in $P$. Thus, for each $n$ one can choose $b_n\in P$ such that $a\le b_n$ for all $a\in A_n$.
		
		Since $A=\bigcup_{n\in\omega}A_n$, the countable set $\{b_n:n\in\omega\}$ dominates $A$. Hence $P$ is $\kappa$-fine.
	\end{proof}
	
	\begin{corollary}\label{finein} If $P\geq_T Q$ and $\cof(Q)>\omega$, then $\f(P)\leq \f(Q)$.
	\end{corollary}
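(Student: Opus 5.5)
Since $P\geq_T Q$ means $Q\le_T P$, the plan is to apply Lemma~\ref{Tukey} with the roles of the posets chosen so that fineness passes from $P$ down to $Q$. Before doing so, we split according to $\cof(P)$, because the value of $\f$ is defined by cases. Note first that $Q\le_T P$ gives $\cof(Q)\le\cof(P)$: if $\varphi:P\to Q$ is a cofinal map, it sends a cofinal subset of $P$ of size $\cof(P)$ onto a cofinal subset of $Q$. Hence $\cof(P)\ge\cof(Q)>\omega$, so both fineness indices are computed from the main clause of the definition. In particular $\f(Q)$ is the least $\lambda$ for which $Q$ is not $\lambda$-fine, and $\omega_1\le\f(Q)\le\cof(Q)$ by the remarks following the definition.

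The key step is an argument by contradiction. Suppose $\f(P)>\f(Q)$ and put $\lambda=\f(Q)$. Then $\lambda<\f(P)$, and by minimality in the definition of $\f(P)$ the poset $P$ is $\lambda$-fine. Since $\lambda\ge\omega_1$ is uncountable and $Q\le_T P$, Lemma~\ref{Tukey} applies with $Q$ in the role of the smaller poset and yields that $Q$ is $\lambda$-fine. This contradicts the choice $\lambda=\f(Q)$, which is exactly the least cardinal at which $Q$ fails to be fine. Therefore $\f(P)\le\f(Q)$.

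One small point needs checking. Lemma~\ref{Tukey} is stated for directed posets, while the definition of $\f$ and the corollary allow directed pre-ordered sets. The proof of the lemma uses only the unbounded-map characterization and the definition of domination, and neither relies on antisymmetry. So the lemma applies verbatim to pre-orders, or alternatively one can pass to the quotient posets, which are Tukey equivalent to the original pre-orders. The remaining steps are routine, and I expect no real obstacle beyond confirming that $\cof(P)>\omega$ so that the correct clause of the definition of $\f(P)$ is used.
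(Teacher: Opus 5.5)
Your proposal is correct and follows the paper's route: the paper treats the corollary as an immediate consequence of Lemma~\ref{Tukey}, since $\kappa$-fineness of $P$ passes to $Q$ for every uncountable $\kappa$, so $Q$ cannot stop being fine before $P$ does. Your contradiction argument is exactly this, and your check that $\cof(P)\ge\cof(Q)>\omega$ correctly confirms that the main clause of the definition of $\f(P)$ applies.
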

	
	It is immediate that if $P$ has countable cofinality, then $P$ is $\kappa$-fine for every uncountable cardinal $\kappa$. Thus we are mainly interested in pre-ordered sets of uncountable cofinality. The following elementary lemma shows that the essential case is when the additivity is countable.
	
	\begin{lemma}\label{cont}
		Let $P$ be a directed pre-ordered set of uncountable cofinality. Then:
		\begin{itemize}
			\item[(i)] If $\add(P)>\omega$, then $\f(P)=\add(P)$;
			\item[(ii)] $\f(P)$ is regular.
		\end{itemize}
	\end{lemma}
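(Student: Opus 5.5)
For (i), I will prove the two inequalities separately. First, $P$ is $\lambda$-fine for every $\lambda<\add(P)$: any $A\subseteq P$ with $|A|\le\lambda<\add(P)$ is bounded, so a single point, and hence a countable set, dominates it. Thus $\f(P)\ge\add(P)$. For the reverse inequality I take an unbounded $A\subseteq P$ with $|A|=\add(P)$ and suppose, for a contradiction, that a countable set $\{b_n:n\in\omega\}$ dominates $A$. Since $\add(P)>\omega$, the countable set $\{b_n\}$ is bounded by some $b$, and then $b$ bounds $A$, which is impossible. So $P$ is not $\add(P)$-fine, and $\f(P)\le\add(P)$.

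For (ii), put $\lambda=\f(P)$. We already know $\omega_1\le\lambda\le\cof(P)$. Suppose $\lambda$ is singular and write $\mu=\cof(\lambda)<\lambda$. Choose an $A\subseteq P$ with $|A|=\lambda$ that is not dominated by any countable set. Decompose $A=\bigcup_{\alpha<\mu}A_\alpha$ with $|A_\alpha|<\lambda$. By minimality of $\lambda$, each $A_\alpha$ is dominated by a countable set $C_\alpha$. Then $C=\bigcup_{\alpha<\mu}C_\alpha$ dominates $A$ and has $|C|\le\mu\cdot\omega$.

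Now I split into two cases. If $\mu\cdot\omega<\lambda$, which always holds because $\lambda>\omega$ and $\mu<\lambda$, then $C$ has size less than $\lambda$ and so is dominated by a countable set $D$. Hence $D$ dominates $A$, by transitivity of domination, and this is a contradiction. So $\lambda$ is regular.

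The only delicate point is making sure $|C|<\lambda$. This holds because $\mu<\lambda$ and $\omega<\lambda$, so $\mu\cdot\omega=\max(\mu,\omega)<\lambda$. No further obstacle is expected.
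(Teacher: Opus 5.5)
Your proof is correct and follows essentially the same route as the paper. In (i) you use that countable sets are bounded when $\add(P)>\omega$. In (ii) you split a witness into $\cof(\lambda)$ pieces of size $<\lambda$, collect their countable dominating sets into a set of size less than $\lambda$, and dominate that set by a countable one.

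One detail is unstated: why the witness can be taken of size exactly $\lambda$. This follows from the minimality of $\lambda$, and the decomposition works for $|A|\le\lambda$ anyway. Also, your announced ``two cases'' collapse to a single case.
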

	
	\begin{proof}
		(i) Assume $\add(P)>\omega$. Then every countable subset of $P$ is bounded. Consequently, any subset of $P$ dominated by a countable set is in fact bounded, i.e., dominated by a single point of $P$. Therefore $P$ is $\kappa$-fine precisely when $\kappa<\add(P)$, so $\f(P)=\add(P)$.
		
		(ii) Put $\lambda=\f(P)$, and suppose toward a contradiction that $\lambda$ is singular. Since $P$ is not $\lambda$-fine, there is a set $X\subseteq P$ with $|X|\le\lambda$ which is not dominated by any countable subset of $P$. In fact $|X|=\lambda$: otherwise $P$ would fail to be $|X|$-fine, contradicting the minimality of $\lambda$. Let $\kappa=\cof(\lambda)<\lambda$, and write
		$
		X=\bigcup_{\alpha<\kappa}X_\alpha
		$
		where $|X_\alpha|<\lambda$ for every $\alpha<\kappa$.
		
		For each $\alpha<\kappa$, since $|X_\alpha|<\lambda$, there exists a countable set $A_\alpha\subseteq P$ dominating $X_\alpha$. Then
		$
		A=\bigcup_{\alpha<\kappa}A_\alpha
		$
		has cardinality at most $\kappa$, and dominates $X$. Since $\kappa<\lambda$ and $P$ is $\kappa$-fine, the set $A$ is dominated by a countable subset of $P$, and hence so is $X$, a contradiction.
	\end{proof}
	
	\begin{corollary}
		$\f(\omega^\omega)=\mathfrak b$.
	\end{corollary}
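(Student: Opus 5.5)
We need $\f(\omega^\omega)=\mathfrak b$, where $\omega^\omega$ carries the pointwise order $\le$. First note that $\cof(\omega^\omega,\le)=\mathfrak d>\omega$, so $\f(\omega^\omega)$ is defined by the fineness clause. The plan is to show two things: $\omega^\omega$ is $\kappa$-fine for every $\kappa<\mathfrak b$, and $\omega^\omega$ is not $\mathfrak b$-fine. Together these give $\f(\omega^\omega)=\mathfrak b$.

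\emph{Fineness below $\mathfrak b$.} Let $A\subseteq\omega^\omega$ with $|A|\le\kappa<\mathfrak b$.
\begin{itemize}
\item By the definition of $\mathfrak b=\add(\omega^\omega,\le^*)$, there is $g\in\omega^\omega$ with $a\le^* g$ for every $a\in A$.
\item The set $D_g=\{h\in\omega^\omega: h=^*g+c \text{ for some constant } c\in\omega\}$ is countable. Concretely, it consists of the functions obtained from $g+c$ by changing finitely many values, and there are only countably many such choices.
\item Each $a\in A$ satisfies $a(n)\le g(n)$ for all $n\ge N$, for some $N$. Define $h$ to agree with $\max(a,g)$ on $n<N$ and with $g$ on $n\ge N$. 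Then $h\in D_g$ (take $c=0$) and $a\le h$ pointwise.
\end{itemize}
So $A$ is dominated by a countable subset of $\omega^\omega$, and $\omega^\omega$ is $\kappa$-fine.

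\emph{Failure of $\mathfrak b$-fineness.} Take $B\subseteq\omega^\omega$ with $|B|=\mathfrak b$ that is $\le^*$-unbounded. Suppose a countable set $\{h_n:n\in\omega\}$ dominated $B$ pointwise. Define the diagonal bound $h(k)=\max_{n\le k}h_n(k)$. Then $h_n\le^* h$ for every $n$. Each $b\in B$ satisfies $b\le h_n$ for some $n$, hence $b\le^* h$. This makes $B$ $\le^*$-bounded, a contradiction. Hence $\omega^\omega$ is not $\mathfrak b$-fine.

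The main point requiring care is the first step. It needs an explicit countable family realising pointwise domination from a single $\le^*$-bound, and the finite-modification family $D_g$ provides exactly that. Alternatively, since $\f$ is defined as a least cardinal, the regularity from Lemma~\ref{cont}(ii) is not needed here.
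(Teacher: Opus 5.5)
Your proof is correct and follows essentially the same approach as the paper. The lower bound is the same finite-modification trick; the paper uses the smaller countable family $\{g\vee c_{m,n}:m,n\in\omega\}$ where you use $D_g$. For the upper bound, your diagonal-bound argument directly carries out what the paper gets from the Tukey reduction $(\omega^\omega,\le^*)\le_T(\omega^\omega,\le)$ combined with Corollary~\ref{finein} and Lemma~\ref{cont}(i).
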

	
	\begin{proof}
		The identity map from $(\omega^\omega,\le)$ to $(\omega^\omega,\le^*)$ is cofinal, and hence
		$
		(\omega^\omega,\le^*)\le_T(\omega^\omega,\le).
		$
		Therefore, by Corollary~\ref{finein} and Lemma~\ref{cont}(i),
		\[
		\f(\omega^\omega)\le \f(\omega^\omega,\le^*)=\mathfrak b.
		\]
		
		For the reverse inequality, let $\kappa<\mathfrak b$, and let 
		$A\subseteq \omega^\omega$ satisfy $|A|\le \kappa$.
		Then $A$ is bounded in $(\omega^\omega,\le^*)$, so there exists 
		$g\in\omega^\omega$ such that $f\le^* g$ for every $f\in A$.
		
		Now note that $f\le^* g$ holds iff there exist $m,n\in\omega$ such that
		\[
		f\le g\vee c_{m,n},
		\]
		where $c_{m,n}\in\omega^\omega$ is defined by
		$c_{m,n}(k)=m$ for $k\le n$, and $c_{m,n}(k)=0$ otherwise.
		
		It follows that every $f\in A$ is dominated (with respect to the pointwise order) by some member of the countable set
		\[
		\{\,g\vee c_{m,n}:m,n\in\omega\,\}.
		\]
		Thus $A$ is dominated by a countable subset of $\omega^\omega$, so $\omega^\omega$ is $\kappa$-fine.
		
		Therefore $\f(\omega^\omega)\ge \mathfrak b$, and hence
		$\f(\omega^\omega)=\mathfrak b$.
	\end{proof}

	\begin{theorem}\label{Th:car}
		Let $P$ be a directed poset. Then every topological group with a $P$-base satisfies
		\[
		\chi(G)\in \{1,\omega\}\cup [\f(P),\cof(P)].
		\]
	\end{theorem}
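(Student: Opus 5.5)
The plan is to reduce everything to a statement about $\Ne_e(G)$ and then use Lemma~\ref{Tukey} together with a group-theoretic argument. Since $G$ has a $P$-base, $\Ne_e(G)\le_T P$, and $\chi(G)=\cof(\Ne_e(G))\le\cof(P)$ because the image of a cofinal subset of $P$ under the order-preserving map is cofinal in $\Ne_e(G)$. So the upper bound is immediate.

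For the lower bound, assume $\chi(G)>\omega$ and show $\chi(G)\ge\f(P)$. Put $\kappa=\chi(G)$. We may assume $\cof(P)>\omega$; otherwise $\chi(G)\le\cof(P)\le\omega$. Suppose toward a contradiction that $\kappa<\f(P)$. Then $P$ is $\kappa$-fine. By Lemma~\ref{Tukey} applied to $\Ne_e(G)\le_T P$ (here $\kappa$ is uncountable), $\Ne_e(G)$ is $\kappa$-fine. Take a base $\mathcal B$ at $e$ with $|\mathcal B|=\kappa$. By fineness, $\mathcal B$ is dominated in $\Ne_e(G)$ by a countable family $\{U_n:n\in\omega\}$, meaning each $B\in\mathcal B$ contains some $U_n$. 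Hence $\{U_n\}$ is itself a countable local base at $e$, so $\chi(G)\le\omega$, a contradiction.

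Thus $\kappa\ge\f(P)$ whenever $\chi(G)>\omega$. The remaining possibilities are $\chi(G)=1$ (discrete) or $\chi(G)=\omega$; finite characters other than $1$ cannot occur because $G$ is Hausdorff, so a finite base at $e$ has intersection $\{e\}$ open. No step is a real obstacle. The only point needing care is the direction of domination in reverse inclusion ($U\le V$ iff $V\subseteq U$), which is exactly what makes a dominating countable family a base. Notably, the group structure is used only to pass from "local base at $e$" to "character", so the argument in fact works for any point with a $P$-base.
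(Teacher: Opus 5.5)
Your proof is correct and follows the paper's argument. The paper handles the discrete and first-countable cases separately and then chains $\f(P)\le\f(\Ne_e(G))\le\cof(\Ne_e(G))\le\cof(P)$ via Corollary~\ref{finein}; you instead unpack that corollary and the inequality $\f(Q)\le\cof(Q)$ into a direct contradiction using Lemma~\ref{Tukey}, which is the same underlying reasoning.
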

	
	\begin{proof}
		If $G$ is discrete, then $\chi(G)=1$. If $G$ is non-discrete and first-countable, then $\chi(G)=\omega$. Thus it remains to consider the case where $G$ is not first-countable. Then $\chi(G)$ is uncountable and
		\[
		\chi(G)=\cof(\Ne_e(G)).
		\]
		
		Since $G$ has a $P$-base, we have $\Ne_e(G)\le_T P$. Hence, by Corollary~\ref{finein},
		\[
		\f(P)\le \f(\Ne_e(G)).
		\]
		Moreover,
		\[
		\f(\Ne_e(G))\le \cof(\Ne_e(G))\le \cof(P).
		\]
		Combining these inequalities yields
		\[
		\chi(G)=\cof(\Ne_e(G))\in [\f(P),\cof(P)].
		\]
		
		Together with the trivial cases $\chi(G)=1$ and $\chi(G)=\omega$, the conclusion follows.
	\end{proof}
	
	We shall see that an analogous result also holds for the pseudocharacter.
	Recall that the pseudocharacter of a topological group $G$ is the least cardinal $\tau$ such that
	$
	\{e\}
	$
	is the intersection of $\tau$ many open sets; we denote it by $\psi(G)$.
	\begin{proposition}
		Let $P$ be a directed poset. Then every topological group with a $P$-base satisfies
		\[
		\psi(G)\in \{1,\omega\}\cup [\f(P),\cof(P)].
		\]
	\end{proposition}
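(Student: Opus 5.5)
The plan is to split into trivial and nontrivial cases as in Theorem~\ref{Th:car}. If $G$ is discrete, then $\psi(G)=1$. If $\{e\}$ is a $G_\delta$ set but not open, then $\psi(G)=\omega$. The upper bound $\psi(G)\le\chi(G)\le\cof(P)$ always holds, by Theorem~\ref{Th:car} and because $\psi\le\chi$. So it remains to show the following: if $\psi(G)$ is uncountable, then $\psi(G)\ge\f(P)$. Equivalently, for every cardinal $\kappa<\f(P)$, if $\{e\}$ is an intersection of at most $\kappa$ open sets, then it is already an intersection of countably many open sets.

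For this, fix $\kappa<\f(P)$ and open neighbourhoods $\{U_\alpha:\alpha<\kappa\}$ of $e$ with $\bigcap_\alpha U_\alpha=\{e\}$. If $\kappa\le\omega$ there is nothing to prove, so assume $\kappa$ is uncountable. By Corollary~\ref{finein} and $\Ne_e(G)\le_T P$ (we may assume $\cof(\Ne_e(G))>\omega$, since otherwise $\chi(G)\le\omega$ and then $\psi(G)\le\omega$), we get $\f(P)\le\f(\Ne_e(G))$. Alternatively, Lemma~\ref{Tukey} applied directly shows that $\Ne_e(G)$ is $\kappa$-fine, because $P$ is $\kappa$-fine. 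Hence the family $\{U_\alpha\}$ is dominated in $\Ne_e(G)$ (reverse inclusion) by countably many neighbourhoods $V_n$. In other words, every $U_\alpha$ contains some $V_n$. This is the wrong direction for intersections: the sets $V_n$ are smaller, so their intersection is contained in $\{e\}$ only if they are refined by the $U_\alpha$, which does not follow.

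I therefore expect the main obstacle to be this direction issue. To fix it, I would use the group structure. First replace each $U_\alpha$ by a symmetric neighbourhood $W_\alpha$ with $W_\alpha^2\subseteq U_\alpha$. Then fineness gives countably many neighbourhoods $V_n$ such that each $W_\alpha\supseteq V_{n(\alpha)}$. Next set $A_n=\{\alpha: V_n\subseteq W_\alpha\}$ and $O_n=\bigcap_{\alpha\in A_n}W_\alpha$. Each $O_n$ contains $V_n$, so it is a neighbourhood of $e$, though not necessarily an open set. Since the $A_n$ cover $\kappa$, we get $\bigcap_n O_n=\bigcap_\alpha W_\alpha\subseteq\bigcap_\alpha U_\alpha=\{e\}$. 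Finally, for each $n$ choose an open set with $e\in\operatorname{int}O_n\subseteq O_n$; the interior contains $V_n$, so it is nonempty and contains $e$. Then $\{e\}=\bigcap_n\operatorname{int}O_n$ is a countable intersection of open sets, so $\psi(G)\le\omega$.

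With this argument, uncountable $\psi(G)$ forces $\psi(G)\ge\f(P)$. The symmetrization step turns out not to be needed, only the grouping by $n$. Combining this with $\psi(G)\le\cof(P)$ and the cases $\psi(G)\in\{1,\omega\}$ yields $\psi(G)\in\{1,\omega\}\cup[\f(P),\cof(P)]$.
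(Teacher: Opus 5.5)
Your proof is correct and follows the paper's route: Lemma~\ref{Tukey} makes $\Ne_e(G)$ $\kappa$-fine, which gives countably many neighbourhoods $V_n$ with $V_{n(\alpha)}\subseteq U_\alpha$ for each $\alpha$. The ``direction issue'' you raise does not exist, though: $\bigcap_n V_n\subseteq V_{n(\alpha)}\subseteq U_\alpha$ for every $\alpha$, so $\bigcap_n V_n\subseteq\bigcap_\alpha U_\alpha=\{e\}$ immediately, which is exactly the paper's one-line conclusion, and your sets $O_n\supseteq V_n$ (like the symmetrization) are a valid but unnecessary detour.
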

	
	\begin{proof}
		Since $\psi(G)\le \chi(G)$, it suffices to prove:
		
		\smallskip
		
		\emph{If $\psi(G)>\omega$, then $\psi(G)\ge \f(P)$.}
		\smallskip
		
		Let $\kappa=\psi(G)$, and suppose
		$
		\{e\}=\bigcap_{\alpha<\kappa} U_\alpha,
		$
		where each $U_\alpha$ is an open neighbourhood of $e$.
		Suppose $\kappa<\f(P)$. Then $P$ is $\kappa$-fine, and hence so is $\Ne_e(G)$ by Lemma~\ref{Tukey}. Therefore there exists a countable family of neighbourhoods $\{V_n:n\in\omega\}$ such that each $U_\alpha$ contains some $V_n$. It follows that
		$
		\{e\}=\bigcap_{n\in\omega}V_n,
		$
		so $\psi(G)\le \omega$, a contradiction.
	\end{proof}

	If one defines, for any directed poset $P$,
	\[
	\Sp_\chi(P)=\{\chi(G): \text{$G$ is a non-metrizable topological group with a $P$-base}\},
	\]
	called the \emph{character spectrum of topological groups with a $P$-base}, then Theorem~\ref{Th:car} yields
	\[
	\Sp_\chi(P)\subseteq [\f(P),\cof(P)].
	\]
	When $P=\omega^\omega$, this recovers precisely the result of Gabriyelyan, K\k akol and Leiderman.
	
	They asked whether there exists a topological group with an $\omega^\omega$-base whose character is exactly $\mathfrak b$. This was soon answered positively by Leiderman, Pestov, and Tomita, who proved the following stronger result.
	
	\begin{proposition}\cite[Proposition 2.6]{LPT}
		If $\kappa$ is a regular cardinal Tukey below $\omega^\omega$, then there exists a topological group $G$ such that
		\[
		\Ne_e(G)=_T \kappa.
		\]
	\end{proposition}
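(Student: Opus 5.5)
The plan is to realize $\kappa$ as the neighbourhood filter of an explicit group topology. Take $G$ to be a Boolean group, for instance $G=\bigoplus_{\kappa}\mathbb Z_2$ or, more conveniently, a subgroup of $\mathbb Z_2^\kappa$. For $\alpha<\kappa$ define
\[
H_\alpha=\{x: x(\beta)=0 \text{ for all } \beta<\alpha\}.
\]
The $H_\alpha$ form a decreasing chain of subgroups. Declaring them a base at $e$ gives a group topology, since a decreasing chain of subgroups is always a neighbourhood base of a (possibly non-Hausdorff) group topology. We have $\bigcap_{\alpha}H_\alpha=\{0\}$, because any nonzero $x$ is nonzero at some $\beta$ and so lies outside $H_{\beta+1}$. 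Hence the topology is Hausdorff.

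Next we check that $\Ne_e(G)=_T\kappa$. The map $\alpha\mapsto H_\alpha$ is order-preserving from $\kappa$ into $\Ne_e(G)$ and has cofinal image. Conversely, the $H_\alpha$ are strictly decreasing: the characteristic function of $\{\alpha\}$ lies in $H_\alpha\setminus H_{\alpha+1}$. So the chain is cofinal of order type $\kappa$, and by Example~\ref{ex}(iii) the filter $\Ne_e(G)$ is Tukey equivalent to its cofinal subset $\{H_\alpha\}$, which is order-isomorphic to $\kappa$.

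The hypothesis that $\kappa$ is Tukey below $\omega^\omega$ is not needed for the existence statement itself. It only guarantees that the resulting group has an $\omega^\omega$-base, since $\Ne_e(G)=_T\kappa\le_T\omega^\omega$ and $\Ne_e(G)$ is Dedekind complete. Regularity is used to identify the cofinal type: for regular $\kappa$ the chain is genuinely of type $\kappa$. For singular $\kappa$ one would only get $\cof(\kappa)$.

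The only step that needs any care is confirming that the chain topology is a group topology. For subgroups this reduces to the facts that $H_\alpha+H_\alpha=H_\alpha$ and $-H_\alpha=H_\alpha$, both immediate. So I expect no serious obstacle.
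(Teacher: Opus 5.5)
Your argument is correct and matches the construction the paper itself relies on: Proposition~\ref{poset2group} applied to $P=\kappa$ gives the group $([\kappa]^{<\omega},\triangle)$ with base $[\uparrow\alpha]^{<\omega}$, which is exactly $\bigoplus_\kappa\mathbb Z_2$ with your subgroups $H_\alpha$, and, as in the paper, the hypothesis $\kappa\le_T\omega^\omega$ serves only to make this an $\omega^\omega$-base. One side remark needs correcting: regularity is not needed for $\Ne_e(G)=_T\kappa$ itself, because for singular $\kappa$ your chain still has order type $\kappa$ and $\kappa=_T\cof(\kappa)$ as directed sets; what regularity actually gives is $\chi(G)=\kappa$.
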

	
	Since
	\[
	\mathfrak b\le_T (\omega^\omega,\le^*)\le_T \omega^\omega
	\]
	(see Example~\ref{ex}(ii)), this immediately yields a positive answer.
	
	It is therefore natural to ask whether 
\begin{question}\label{Q1} Let $P$ be a directed poset with $\cof(P)>\omega$. Do we have $\f(P)\in \Sp_\chi(P)$? Or equivalently, is $\f(P)$ the least cardinal that can occur as the character of a non-metrizable topological group with a $P$-base?
\end{question}

 We now give some discussion of this question.
	
	We begin with a simple observation on constructing topological groups with prescribed cofinal type (together with some information about pseudocharacter). 
	
	\begin{proposition}\label{poset2group}
		Let $P$ be a directed poset. Then there exists a Boolean group $G$ with a local base $\mathcal{B}$ at $e$ consisting of subgroups and
		\[
		\Ne_e(G)=_T P,
		\qquad
		\psi(G)=\add(P).
		\]
	\end{proposition}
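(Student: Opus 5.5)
Take $G=\{0,1\}^{(P)}$, the Boolean group of finitely supported functions $P\to\{0,1\}$; equivalently $[P]^{<\omega}$ with symmetric difference. For $p\in P$ put
\[
H_p=\{g\in G:\ \operatorname{supp}(g)\cap {\downarrow}p=\emptyset\},\qquad {\downarrow}p=\{q\in P:q\le p\}.
\]
Each $H_p$ is a subgroup, and $p\le p'$ implies ${\downarrow}p\subseteq{\downarrow}p'$, so $H_{p'}\subseteq H_p$. Because $P$ is directed, $H_p\cap H_{p'}\supseteq H_r$ for any common upper bound $r$. Hence $\{H_p:p\in P\}$ is a filter base of subgroups of an abelian group and generates a group topology with local base $\mathcal B=\{H_p\}$. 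The map $p\mapsto H_p$ is order preserving from $P$ onto the cofinal set $\mathcal B\subseteq\Ne_e(G)$, so $G$ has a $P$-base and $\Ne_e(G)\le_T P$.

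For the Hausdorff property, note that every nonzero $g$ has finite support, so choose $q\in\operatorname{supp}(g)$; then $q\in{\downarrow}q$, so $g\notin H_q$. Thus $\bigcap_p H_p=\{e\}$.

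For $P\le_T\Ne_e(G)$, I would produce an unbounded map $\psi:P\to\mathcal B$ (and $\mathcal B=_T\Ne_e(G)$ by Example~\ref{ex}(iii)), namely $\psi(p)=H_p$ again. Suppose $A\subseteq P$ and $\{H_a:a\in A\}$ is bounded, i.e.\ some $H_r\subseteq H_a$ for all $a\in A$. Then the point mass $\chi_{\{a\}}$ lies outside $H_a$, hence outside $H_r$, so $a\in{\downarrow}r$, i.e.\ $a\le r$. Thus $A$ is bounded. Hence $\Ne_e(G)=_T P$. This is the step where the choice $\operatorname{supp}\cap{\downarrow}p=\emptyset$, rather than e.g.\ $\operatorname{supp}\not\ni p$, is essential, and I regard it as the key point.

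It remains to compute the pseudocharacter. If $A\subseteq P$ is unbounded, I claim $\bigcap_{a\in A}H_a=\{e\}$. Indeed, if $g\ne e$ lies in all $H_a$, then the finite set $F=\operatorname{supp}(g)$ is disjoint from every ${\downarrow}a$, so no point of $F$ lies below any $a\in A$. Directedness only gives common upper bounds, so this alone is not a contradiction. Here I would modify the construction: replace $P$ by a cofinal copy in which every element has a strictly larger element, or else allow the intersection to be taken over more sets. Concretely, I would show that for each $q\in F$ there is some $a_q\in A$ with $q\not\le a_q$ being impossible to avoid only when $A$ is bounded. I expect this to need adjustment: either take intersections over $\{H_b: b\ \text{a bound of finite subsets of } A\}$, which has size $|A|$, or show $\{e\}=\bigcap_{a\in A'}H_a$ for $A'$ the closure of $A$ under chosen upper bounds of pairs. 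The target is that an unbounded set of size $\add(P)$ yields $\psi(G)\le\add(P)$.

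Conversely, if $\{e\}=\bigcap_{\alpha<\kappa}U_\alpha$, choose $H_{p_\alpha}\subseteq U_\alpha$. If $\{p_\alpha\}$ were bounded by $r$, then $H_r\subseteq\bigcap_\alpha U_\alpha=\{e\}$, making $G$ discrete. That would force $\cof(P)=1$, in which case $\add(P)=1=\psi(G)$ anyway. Otherwise $\kappa\ge\add(P)$.

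The main obstacle is therefore the upper bound $\psi(G)\le\add(P)$, i.e.\ getting intersections over unbounded families to be trivial. I would first try to enlarge the generating subgroups by using ${\uparrow}$-closed conditions: require the support to avoid every $q$ that is not $\ge$-comparable above, suitably dualized.
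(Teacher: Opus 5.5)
There is a genuine gap in the pseudocharacter step. Your construction cannot be repaired, because with $H_p=\{g:\operatorname{supp}(g)\cap{\downarrow}p=\emptyset\}$ one has, for every $A\subseteq P$,
\[
\bigcap_{a\in A}H_a=\Bigl[P\setminus\bigcup_{a\in A}{\downarrow}a\Bigr]^{<\omega}.
\]
This intersection is trivial iff $\bigcup_{a\in A}{\downarrow}a=P$, i.e.\ iff $A$ is \emph{cofinal}, not merely unbounded. Your lower-bound argument shows that any representation $\{e\}=\bigcap_{\alpha<\kappa}U_\alpha$ gives $\bigcap_\alpha H_{p_\alpha}=\{e\}$. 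Together these show that your group has $\psi(G)=\cof(P)=\chi(G)$.

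So the statement fails for your $G$ whenever $\add(P)<\cof(P)$. Take $P=\omega^\omega$ with the pointwise order, where $\add(P)=\omega<\mathfrak d=\cof(P)$. The constant functions $c_n$ form an unbounded set, yet the point mass at the function $k\mapsto k$ lies in every $H_{c_n}$. None of your proposed patches helps. Passing to a cofinal copy of $P$ leaves the computation $\psi(G)=\cof(P)$ unchanged. Closing $A$ under upper bounds of finite subsets yields a set of size $|A|$, which cannot be cofinal when $|A|=\add(P)<\cof(P)$.

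The missing idea is to dualize, which is the paper's construction and roughly where your last sentence is heading. Take $G_p=[{\uparrow}p]^{<\omega}$, the finite sets whose support is \emph{contained in} ${\uparrow}p$. Then $\bigcap_{a\in A}G_a=[\bigcap_{a\in A}{\uparrow}a]^{<\omega}$, which is trivial iff $A$ has no upper bound. This gives $\psi(G)\le\add(P)$ directly, and your argument gives the reverse inequality.

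Your claim that ${\downarrow}$ is ``essential'' for the Tukey equivalence is mistaken. With $G_p$ the map $p\mapsto G_p$ is still an order embedding, since $\{p'\}\in G_{p'}\subseteq G_p$ forces $p\le p'$. The only thing that changes is the Hausdorff argument. It now needs $\bigcap_{p\in P}{\uparrow}p=\emptyset$, i.e.\ that $P$ has no largest element. The remaining case $P=_T1$ is handled separately by taking $G$ discrete.
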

	
	\begin{proof}
		If $P=_T1$, take $G$ discrete. So assume $P\neq_T1$.
		
		Let
		\[
		G=([P]^{<\omega},\triangle),
		\]
		where $\triangle$ denotes symmetric difference. For each $p\in P$, let
		\[
		\uparrow p=\{q\in P:q\ge p\},
		\qquad
		G_p=[\uparrow p]^{<\omega}.
		\]
		
		It is straightforward that $\{G_p:p\in P\}$ forms a neighbourhood base at the identity for a group topology on $G$, and
		$
		p\mapsto G_p
		$
		is an order embedding with cofinal image. Hence
		$
		\Ne_e(G)=_T P.
		$
		Moreover,
		$
		\bigcap_{p\in P}G_p=\{\emptyset\},
		$
		so the topology is Hausdorff.
		
		It remains to check that $\psi(G)=\add(P)$. For every $A\subseteq P$, we have
		\[
		\bigcap_{p\in A}G_p
		=
		\left[\bigcap_{p\in A}\uparrow p\right]^{<\omega}.
		\]
		Consequently, $\bigcap_{p\in A}G_p=\{\emptyset\}$ if and only if $A$ is unbounded in $P$. Choose an unbounded set $A\subseteq P$ with $|A|=\add(P)$. Then
		\[
		\bigcap_{p\in A}G_p=\{\emptyset\},
		\]
		and hence $\psi(G)\le\add(P)$.
		
		Conversely, suppose that $\{\emptyset\}=\bigcap_{\alpha<\lambda}U_\alpha$, where each $U_\alpha$ is a neighborhood of $\emptyset$. Choose $p_\alpha\in P$ such that $G_{p_\alpha}\subseteq U_\alpha$. Then
		\[
		\bigcap_{\alpha<\lambda}G_{p_\alpha}
		\subseteq
		\bigcap_{\alpha<\lambda}U_\alpha
		=\{\emptyset\}.
		\]
		Thus $\{p_\alpha:\alpha<\lambda\}$ is unbounded in $P$, and therefore $\lambda\ge\add(P)$. Hence $\psi(G)=\add(P)$.
	\end{proof}
	
	By the proposition, an uncountable cardinal $\kappa$ belongs to $\Sp_\chi(P)$ if and only if
	$
	\kappa=\cof(Q)
	$
	for some directed poset $Q\le_T P$. In particular, $\kappa\in\Sp_\chi(P)$ implies $\kappa\le_T P$. The converse cannot be obtained in this way for a singular $\kappa$, since the ordinal $\kappa$, regarded as a directed set, has cofinality $\cof(\kappa)<\kappa$. If $\kappa$ is regular and $\kappa\le_T P$, however, Proposition~\ref{poset2group}, applied to $Q=\kappa$, produces a non-metrizable group of character $\kappa$ with a $P$-base. Therefore, for every uncountable regular cardinal $\kappa$,
	\[
	\kappa\in \Sp_\chi(P)
	\quad\Longleftrightarrow\quad
	\kappa\le_T P.
	\]
	
	Since $\f(P)$ is regular, Question \ref{Q1} is equivalent to the following.
	
	\begin{question}\label{Q:f(P)}
		Do we have $\f(P)\le_T P$?
	\end{question}

	\subsection{The Countable Power}
	In this subsection, we study $\kappa$-fineness of the countable power of a poset under the coordinatewise order. 
	
	%
	%
	%
	%
	%
	
	
We first present a result closely related to Question \ref{Q:f(P)}.
	\begin{proposition}\label{finepower}
		For every directed poset $P$, one has
		\[
		\f(P)\le_T P^\omega .
		\]
	\end{proposition}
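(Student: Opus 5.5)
The plan is to build an unbounded map $\lambda\to P^\omega$, where $\lambda=\f(P)$, directly from a witness to the failure of $\lambda$-fineness. By the Schmidt characterization recalled in the introduction, such a map gives $\f(P)\le_T P^\omega$.

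First I would dispose of the trivial cases. If $\cof(P)=1$, then $\f(P)=1$, and $1\le_T Q$ for every directed $Q$. If $\cof(P)=\omega$, then $\f(P)=\omega$. A cofinal sequence in $P$ gives a cofinal copy of $\omega$ in $P$, hence $\omega\le_T P$. Also $P\le_T P^\omega$ via the projection onto the first coordinate, which is a cofinal map. So $\omega\le_T P^\omega$.

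Now assume $\cof(P)>\omega$, and put $\lambda=\f(P)$. By Lemma~\ref{cont}(ii), $\lambda$ is an uncountable regular cardinal. Hence the unbounded subsets of the ordinal $\lambda$ are exactly those of cardinality $\lambda$. As shown in the proof of Lemma~\ref{cont}(ii), there is a set $X\subseteq P$ with $|X|=\lambda$ that is not dominated by any countable subset of $P$. Enumerate it as $X=\{x_\beta:\beta<\lambda\}$. For each $\alpha<\lambda$ the initial segment $X_\alpha=\{x_\beta:\beta<\alpha\}$ has size $<\lambda$. By minimality of $\lambda$, the poset $P$ is $|X_\alpha|$-fine, so we may choose $a^\alpha_n\in P$ ($n\in\omega$) such that $\{a^\alpha_n:n\in\omega\}$ dominates $X_\alpha$. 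Define
\[
\varphi:\lambda\to P^\omega,\qquad \varphi(\alpha)=(a^\alpha_n)_{n\in\omega}.
\]

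The key step is to check that $\varphi$ is unbounded. Let $S\subseteq\lambda$ be unbounded, so $S$ is cofinal in $\lambda$. Suppose toward a contradiction that $\varphi(S)$ is bounded in $P^\omega$ by some $(b_n)_{n\in\omega}$, so $a^\alpha_n\le b_n$ for all $\alpha\in S$ and all $n$. Given any $\beta<\lambda$, pick $\alpha\in S$ with $\alpha>\beta$. Then $x_\beta\in X_\alpha$, so $x_\beta\le a^\alpha_n\le b_n$ for some $n$. Thus the countable set $\{b_n:n\in\omega\}$ dominates $X$, contradicting the choice of $X$.

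I do not expect a real obstacle here. The only points needing care are the appeal to regularity of $\lambda$, which is what makes "unbounded in $\lambda$" mean "cofinal in $\lambda$", and the fact that the witness $X$ can be taken of size exactly $\lambda$. Both are supplied by Lemma~\ref{cont}.
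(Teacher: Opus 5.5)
Your proof is correct and is essentially the paper's argument in Schmidt-dual form. The paper builds the order-preserving cofinal map $P^\omega\to\f(P)$ sending $f$ to the least $\alpha$ with $p_\alpha$ not dominated by $\{f(n):n\in\omega\}$, and it checks cofinality using exactly your sequences dominating initial segments; you package those sequences directly as an unbounded map $\f(P)\to P^\omega$, which lets you skip the paper's separate $\add(P)>\omega$ case (and your appeal to regularity of $\lambda$ is superfluous, since any unbounded subset of an ordinal is already cofinal in it).
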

	
	\begin{proof}
		If $\cof(P)=1$, the assertion is trivial. If $\cof(P)=\omega$, then $P=_T\omega$ and
		\[
		\f(P)=\omega\le_T\omega^\omega=_T P^\omega.
		\]
		Thus assume that $\cof(P)>\omega$.
		
		If $\add(P)>\omega$, then by Lemma~\ref{cont},
		\[
		\f(P)=\add(P)\le_T P =_T P^\omega,
		\]
		and we are done.
		
		Suppose now that $\add(P)=\omega$. Let
		$
		\kappa=\f(P),
		$
		and fix a family $\{p_\alpha:\alpha<\kappa\}\subseteq P$ which is not dominated by any countable subset of $P$.
		
		For each $f\in P^\omega$, let $\varphi(f)$ be the least ordinal $\alpha<\kappa$ such that $p_\alpha$ is not dominated by
		\[
		G_f=\{f(n):n\in\omega\}.
		\]
		
		We claim that $\varphi:P^\omega\to\kappa$ is a cofinal map.
		
		First, $\varphi$ is order-preserving. Indeed, if $f\le g$ in $P^\omega$, then $G_g$ dominates $G_f$, so any initial segment dominated by $G_f$ is also dominated by $G_g$. Hence
		\[
		\varphi(f)\le \varphi(g).
		\]
		
		Next, $\varphi$ has cofinal image. Fix $\alpha<\kappa$. Since $P$ is $|\alpha|$-fine (because $|\alpha|<\f(P)$), there exists a countable set $\{a_n:n\in\omega\}$ dominating
		$
		\{p_\beta:\beta<\alpha\}.
		$
		Define $f\in P^\omega$ by $f(n)=a_n$. Then $G_f$ dominates every $p_\beta$ for $\beta<\alpha$, so the first element not dominated by $G_f$ must occur at some stage at least $\alpha$, i.e.
		$
		\varphi(f)\ge \alpha.
		$
		Thus the image of $\varphi$ is cofinal in $\kappa$.
		
		Therefore $\kappa\le_T P^\omega$, i.e.
		\[
		\f(P)\le_T P^\omega.              ~~~\qedhere
		\]
	\end{proof}

	Now we have a partial positive answer to Question~\ref{Q:f(P)}: 
	
	If
	$
	P^\omega=_T P
	$
	—for example, if $P=\omega^\omega$—then
	$
	\f(P)\le_T P.
	$

	\begin{proposition}\label{omegapower}
		For every directed poset $P$, one has
		\begin{itemize}
			\item[(i)] $\f(P^\omega)=\f(P)$, if $\add(P)>\omega$;
			\item[(ii)] $\f(P^\omega)=\min\{\mathfrak b, \f(P)\}$, if $\omega=\add(P)<\cof(P)$;
			\item[(iii)] $\f(P^\omega)=\mathfrak{b}$ if $\cof(P)=\omega$.
		\end{itemize}
	\end{proposition}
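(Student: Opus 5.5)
We treat the three cases separately. In each case we either reduce to a Tukey equivalence and apply Corollary~\ref{finein}, or we compute directly. Throughout, $P^\omega$ carries the coordinatewise order.

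\emph{Case (i).} Assume $\add(P)>\omega$. Then $\omega<\add(P)$, so Example~\ref{ex}(vi) gives $P^\omega=_T P$. Moreover $\cof(P)\ge\add(P)>\omega$, and hence also $\cof(P^\omega)>\omega$. Applying Corollary~\ref{finein} in both directions yields $\f(P^\omega)=\f(P)$.

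\emph{Case (iii).} Assume $\cof(P)=\omega$. Then $P=_T\omega$, and taking countable powers of a cofinal map coordinatewise gives $P^\omega=_T\omega^\omega$. Since $\cof(\omega^\omega)=\mathfrak d>\omega$, Corollary~\ref{finein} together with the computation $\f(\omega^\omega)=\mathfrak b$ gives $\f(P^\omega)=\mathfrak b$.

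\emph{Case (ii), upper bound.} Assume $\omega=\add(P)<\cof(P)$. The diagonal map $P\to P^\omega$, $p\mapsto(p,p,\dots)$, is unbounded, so $P\le_T P^\omega$. Since $\cof(P)>\omega$, Corollary~\ref{finein} gives $\f(P^\omega)\le\f(P)$. Next, $\add(P)=\omega$ provides a countable unbounded set $\{u_k:k\in\omega\}\subseteq P$.

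\begin{itemize}
\item Sending $k\mapsto u_k$ is an unbounded map from $\omega$ to $P$: an infinite subset of $\omega$ goes to an infinite subset of $\{u_k\}$. Such a subset is unbounded, because a bound for it, combined by directedness with an upper bound of the finitely many remaining $u_k$'s, would bound all of $\{u_k\}$.
\item Hence $\omega\le_T P$, and coordinatewise $\omega^\omega\le_T P^\omega$.
\item Since $\cof(\omega^\omega)=\mathfrak d>\omega$, Corollary~\ref{finein} gives $\f(P^\omega)\le\f(\omega^\omega)=\mathfrak b$.
\end{itemize}

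\emph{Case (ii), lower bound.} This is the main step. Fix $\kappa<\min\{\mathfrak b,\f(P)\}$ and $A\subseteq P^\omega$ with $|A|\le\kappa$; we show that $A$ is dominated by a countable set.

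\begin{enumerate}
\item For each $n\in\omega$, the set $\{f(n):f\in A\}$ has size at most $\kappa<\f(P)$. So it is dominated by a countable set $\{q_{n,k}:k\in\omega\}\subseteq P$.
\item Using directedness, replace $q_{n,k}$ by an upper bound of $q_{n,0},\dots,q_{n,k}$. Then $k\mapsto q_{n,k}$ is non-decreasing. This monotonization is the key point: for every $f\in A$ there is $h_f\in\omega^\omega$ with $f(n)\le q_{n,j}$ for all $j\ge h_f(n)$.
\item Since $|\{h_f:f\in A\}|\le\kappa<\mathfrak b$, there is $g\in\omega^\omega$ with $h_f\le^* g$ for all $f\in A$.
\item As in the proof that $\f(\omega^\omega)=\mathfrak b$, each $h_f$ is then pointwise below some member of the countable family $\{g\vee c_{m,n}:m,n\in\omega\}$.
\item For $s$ in this family, define $r_s\in P^\omega$ by $r_s(n)=q_{n,s(n)}$. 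By the monotonicity from step 2, $h_f\le s$ implies $f\le r_s$.
\end{enumerate}

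Thus the countable set $\{r_s\}$ dominates $A$, so $P^\omega$ is $\kappa$-fine. Hence $\f(P^\omega)\ge\min\{\mathfrak b,\f(P)\}$, which completes (ii).
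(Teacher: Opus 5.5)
Your lower bound in (ii) is the paper's argument: monotonize the countable dominating families coordinatewise, code each $f\in A$ by a function in $\omega^\omega$, and use $\kappa$-fineness of $\omega^\omega$ for $\kappa<\mathfrak b$. You unfold that fineness via $g\vee c_{m,n}$, whereas the paper cites it. The paper also runs the same direct argument for (iii), while you get (iii) from $P^\omega=_T\omega^\omega$, which works equally well. Case (i), case (iii), and the diagonal map giving $P\le_T P^\omega$ are all correct.

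The one step that fails as written is your justification of $\omega\le_T P$ in the upper bound of (ii). It is not true that every infinite subset of a countable unbounded set $\{u_k:k\in\omega\}$ is unbounded. An infinite subset can omit infinitely many $u_k$, so there need not be only ``finitely many remaining'' elements to absorb. For example, take $P=\omega_1\times\omega$, which has $\add(P)=\omega<\cof(P)=\omega_1$. Put $u_{2k}=(k,0)$ and $u_{2k+1}=(0,k)$. Then $\{u_k\}$ is unbounded, but its even-indexed part is bounded by $(\omega,0)$, so $k\mapsto u_k$ is not an unbounded map.

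The repair is immediate. By directedness, choose $v_0\le v_1\le\cdots$ with $v_k\ge u_0,\dots,u_k$. The chain $\{v_k\}$ dominates $\{u_k\}$, so it is unbounded. For any infinite $S\subseteq\omega$, the set $\{v_k:k\in S\}$ is cofinal in this chain and hence unbounded. Thus $k\mapsto v_k$ witnesses $\omega\le_T P$. Alternatively, simply cite Example~\ref{ex}(ii): $\omega=\add(P)\le_T P$. With this fix your proof is complete.
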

	
	\begin{proof}
		For (i), Example~\ref{ex}(vi) gives $P^\omega=_T P$ whenever $\add(P)>\omega$, and hence $\f(P^\omega)=\f(P)$.
		
		For (ii) and (iii), since $\omega^\omega\le_T P^\omega$ and $P\leq_T P^\omega$, Corollary~\ref{finein} yields
		\[
		\begin{aligned}
			\f(P^\omega) &\le \f(\omega^\omega)=\mathfrak b,\\
			\f(P^\omega) &\le \f(P), \qquad\text{if }\cof(P)>\omega.
		\end{aligned}
		\]

		To prove the reverse inequality, let $\kappa<\min\{\mathfrak b, \f(P)\}$ in (ii) and respectively $\kappa<\mathfrak{b}$ in (iii), and let
		$A\subseteq P^\omega$ have cardinality at most $\kappa$.
		For each $n\in\omega$, let
		\[
		A_n=\{x(n):x\in A\}\subseteq P.
		\]
		Then $|A_n|\le \kappa$, so $A_n$ is dominated by a countable subset $B_n\subseteq P$, because $P$ is $\kappa$-fine in each case of (ii) and (iii). Enumerate $B_n$ as $\{b_{n,k}:k\in\omega\}$, allowing repetitions when $B_n$ is finite. By directedness, choose an increasing sequence $(c_{n,k})_{k\in\omega}$ in $P$ such that
		\[
		b_{n,0},\ldots,b_{n,k}\le c_{n,k}
		\]
		for every $k\in\omega$. Put $C_n=\{c_{n,k}:k\in\omega\}$.
		
		The set
		$
		C=\prod_{n\in\omega}C_n
		$
		dominates $A$. Moreover, the map
		\[
		\theta:\omega^\omega\longrightarrow C,
		\qquad
		\theta(f)(n)=c_{n,f(n)},
		\]
		is order-preserving and has cofinal image.
		
		For each $x\in A$, choose $c_x\in C$ with $x\le c_x$, and then choose $f_x\in\omega^\omega$ such that $c_x=\theta(f_x)$. Let $D=\{f_x:x\in A\}$. Then $|D|\le\kappa$.
		
		Since $\omega^\omega$ is $\kappa$-fine, the set $D$ is dominated by a countable subset $H\subseteq\omega^\omega$. Consequently, the countable set $\theta(H)$ dominates $\{c_x:x\in A\}$, and hence also dominates $A$ in $P^\omega$.
		
		Thus $P^\omega$ is $\kappa$-fine for every $\kappa<\min\{\mathfrak b, \f(P)\}$ in (ii) and for every $\kappa<\mathfrak{b}$, so
		the assertions of (ii) and (iii) follow.
	\end{proof}

	\section{Cofinal Type of Free Topological (Abelian) Groups}
	If $(X,\U)$ is a uniform space, then $\U$, viewed as a subset of the power set of $X\times X$, can be ordered by reverse inclusion, i.e., $U\leq V$ if and only if $U\supseteq V$. With this order, $\U$ becomes a directed poset.
	
	
	The set of all uniformly continuous pseudometrics on $(X,\U)$ bounded by $1$ is denoted by $\mathcal{P}(X,\U)$, and is ordered by $d_1\leq d_2$ if $d_1(x,y)\leq d_2(x,y)$ for all $x,y\in X$.
	
	The following description of the local Tukey structure of free Abelian topological groups was made explicit by Gartside in \cite[Theorem 3.1]{Gart}. It is worth noting, however, that this result had essentially already been proved much earlier by Nickolas and Tkachenko in \cite{NT}. Their original goal was to compute the character of free Abelian topological groups, but their argument in fact establishes the stronger Tukey equivalence below. This was only explicitly observed later by Gartside in 2021.
	
	\begin{lemma}\cite{Gart,NT}\label{3eq}
		Let $(X,\U)$ be a uniform space. Then
		\[
		\Ne_e(A(X,\U))=_T (\U^\omega,\le)=_T \mathcal P(X,\U).
		\]
	\end{lemma}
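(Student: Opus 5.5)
The proof has two halves. First, $\U^\omega=_T\mathcal P(X,\U)$, which is purely about uniformities. Second, $\Ne_e(A(X,\U))=_T\U^\omega$, which needs an explicit description of a local base of $A(X,\U)$.

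\emph{Uniform part.} Both posets are Dedekind complete. In $\U^\omega$ a bounded family is coordinatewise contained in entourages, so coordinatewise intersections are symmetric entourages. In $\mathcal P(X,\U)$ the pointwise supremum of a family of pseudometrics all $\le d_0$ is again a pseudometric. It is bounded by $1$, and it is uniformly continuous because it is dominated by $d_0$. So by the result of \cite{GartM} it suffices to produce order-preserving maps with cofinal image in both directions.

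For $\mathcal P(X,\U)\to\U^\omega$, take $d\mapsto(\{d<2^{-n}\})_{n}$. This map is order-preserving. Its image is cofinal by the standard metrization lemma: given $(V_n)$, pass to a normal sequence $U_n\in\U$ with $3U_{n+1}\subseteq U_n\cap V_0\cap\dots\cap V_n$. The lemma then gives $d\in\mathcal P(X,\U)$ with $\{d<2^{-n}\}\subseteq U_n\subseteq V_n$.

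For the other direction I would not look for an order-preserving map. Instead I would show that $\varphi:d\mapsto(\{d<2^{-n}\})_n$ is also an unbounded map, i.e.\ it reflects boundedness. Suppose $D\subseteq\mathcal P(X,\U)$ satisfies $\{d<2^{-n}\}\supseteq V_n$ for all $d\in D$ and $n$. Applying the metrization lemma to $(V_n)$ gives $d'$ with $\{d'<2^{-n}\}\subseteq V_n$. Then every $d\in D$ satisfies $d\le\min\{1,4d'\}\in\mathcal P(X,\U)$. Hence $\U^\omega\le_T\mathcal P(X,\U)$ as well.

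\emph{Group part.} For $\Ne_e(A(X,\U))\le_T\U^\omega$ I would use the Nickolas--Tkachenko description of the topology. For $\bar V=(V_n)\in\U^\omega$ put
\[
W(\bar V)=\Bigl\{\textstyle\sum_{k<n}\varepsilon_k(x_k-y_k):n\in\omega,\ \varepsilon_k=\pm1,\ (x_k,y_k)\in V_k\Bigr\}.
\]
The claim is that these sets form a base at $0$. They are neighbourhoods: given $\bar V$, the metrization lemma gives $d$ with $\{d<2^{-k}\}\subseteq V_k$, and the Graev extension $N_d$ of $d$ is a continuous seminorm on $A(X,\U)$. A ball of $N_d$ of small radius is then contained in $W(\bar V)$; this uses the fact that $N_d$ is attained on reduced words as a sum of $d$-distances of paired letters, followed by a sorting of the pairs by size. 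Conversely, every neighbourhood contains some $W(\bar V)$, obtained by choosing neighbourhoods $O_k$ with $O_0+\dots+O_k\subseteq O$ and then $V_k$ with $x-y\in O_k$ for $(x,y)\in V_k$, by uniform continuity of $X\to A(X,\U)$. Since $\bar V\mapsto W(\bar V)$ is order-preserving, this gives the inequality.

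For $\U^\omega\le_T\Ne_e(A(X,\U))$, I would go through $\mathcal P(X,\U)$ using the unbounded map $d\mapsto\{g:N_d(g)<1\}$. If a family of such balls all contain a fixed neighbourhood $O$, construct a continuous invariant pseudometric $\rho$ on the group from a sequence $O\supseteq O_1\supseteq\cdots$ with $O_{n+1}+O_{n+1}\subseteq O_n$. Then $d_O(x,y)=\min\{1,\rho(x-y)\}$ lies in $\mathcal P(X,\U)$. Comparing $d(x,y)=N_d(x-y)$ with $\rho$ should yield $d\le C\,d_O$ uniformly over the family, so the family is bounded in $\mathcal P(X,\U)$.

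The main obstacle is the base claim for the sets $W(\bar V)$, and in particular the step that a small Graev ball lies in $W(\bar V)$. Scaling arguments cannot handle it: it requires the combinatorics of reduced words in the free abelian group, which is exactly the content of the Nickolas--Tkachenko covering-tree argument.
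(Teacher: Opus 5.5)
The paper gives no proof of this lemma; it quotes it from Gartside and Nickolas--Tkachenko. Your outline therefore has to stand on its own, and most of it does.

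In the uniform part there is only a mislabel. The order-preserving cofinal map $d\mapsto(\{d<2^{-n}\})_n$ from $\mathcal P(X,\U)$ to $\U^\omega$ gives $\U^\omega\le_T\mathcal P(X,\U)$. Showing that the same map is unbounded gives $\mathcal P(X,\U)\le_T\U^\omega$, not $\U^\omega\le_T\mathcal P(X,\U)$ as you wrote. So both directions are present.

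The inclusion of some $W(\bar V)$ in every neighbourhood of $0$ is correct. The last step is also correct once ``should yield'' is made precise. You need $N_d(k(x-y))\ge k\,d(x,y)$, which follows by evaluating the homomorphism that extends the $1$-Lipschitz map $z\mapsto d(z,y)$. Then $x-y\in O_n$ gives $2^n(x-y)\in O\subseteq\{N_d<1\}$, hence $d(x,y)<2^{-n}$ for every $d$ in the family. This bounds the family by $\min\{1,C\,d_O\}$.

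The genuine gap is the step you flag yourself: that $W(\bar V)$ is a neighbourhood of $0$. It carries the whole of $\Ne_e(A(X,\U))\le_T\U^\omega$, and your sketch fails for your choice of $d$. If $\{d<2^{-k}\}\subseteq V_k$ and $N_d(g)<r$, sorting the matched distances $c_1\ge\dots\ge c_m$ gives only $c_k<r/k$, not $c_k<2^{-(k-1)}$.

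The inclusion can really fail. Take $\mathbb R$ with $d=\min\{1,|x-y|\}$ and $V_k=\{d<2^{-k}\}$. The element $\sum_{i\le m}(x_i-y_i)$, built from $m$ widely separated pairs with $d(x_i,y_i)=r/(2m)$, lies in $\{N_d<r\}$. It is not in $W(\bar V)$ for large $m$: edges of index $\ge K=\lceil\log_2(4m/r)\rceil$ have total length $<r/(2m)$, so each pair needs its own edge of index $<K$, which is impossible once $m>K$.

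The missing idea is a change of scale, not covering trees.
\begin{itemize}
\item Apply the metrization lemma to $V'_n=\bigcap_{j<2^{n+1}}V_j$. This gives $d\in\mathcal P(X,\U)$ with $\{d<1/(k+1)\}\subseteq V_k$ for all $k$.
\item For $g$ with reduced form $x_1+\dots+x_m-y_1-\dots-y_m$, one has $N_d(g)=\min_\sigma\sum_i d(x_i,y_{\sigma(i)})$. To see this, view any representation $g=\sum_l(a_l-b_l)$ as edges $b_l\to a_l$, decompose them into paths from the $y$'s to the $x$'s plus cycles, and use the triangle inequality.
\item If $N_d(g)<1$, the sorted matched distances satisfy $c_k<1/k$. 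So the $k$-th pair lies in $V_{k-1}$, and $\{N_d<1\}\subseteq W(\bar V)$.
\end{itemize}
With this, your outline becomes a complete and self-contained proof. The covering-tree machinery is needed only for the non-abelian group $F(X,\U)$.
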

	
	Combining this with Proposition~\ref{omegapower}, we obtain the following.
	
	\begin{corollary}
		Let $(X,\U)$ be a uniform space. Then
		\begin{itemize}
			\item $\f(A(X,\U))=1$, if $\cof(\U)=1$;
			\item $\f(A(X,\U))=\f(\U)$, if $\add(\U)>\omega$;
			\item $\f(A(X,\U))=\min\{\mathfrak b,\f(\U)\}$, if $\add(\U)=\omega<\cof(\U)$;
			\item $\f(A(X,\U))=\mathfrak b$, if $\cof(\U)=\omega$.
		\end{itemize}
	\end{corollary}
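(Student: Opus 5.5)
The statement is the Corollary computing $\f(A(X,\U))$. The plan is to transport everything along the Tukey equivalence of Lemma~\ref{3eq} and then read off the answer from Proposition~\ref{omegapower}.

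First, we record that $\f$ is a Tukey invariant. If $P=_T Q$, then $\cof(P)=\cof(Q)$, and by Lemma~\ref{Tukey} (applied in both directions) $P$ and $Q$ are $\kappa$-fine for exactly the same uncountable $\kappa$. Hence $\f(P)=\f(Q)$. This covers the cases $\cof\le\omega$ as well, since there the value of $\f$ is determined by the cofinality. Here $\f(A(X,\U))$ means $\f(\Ne_e(A(X,\U)))$. By Lemma~\ref{3eq}, $\Ne_e(A(X,\U))=_T\U^\omega$, so it suffices to compute $\f(\U^\omega)$.

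Now we apply Proposition~\ref{omegapower} with $P=\U$, case by case.
\begin{itemize}
\item If $\cof(\U)=1$, then $\U$ has a largest element, namely $\Delta\in\U$ (so $X$ is uniformly discrete). Hence $\U^\omega$ has a largest element, $\cof(\U^\omega)=1$, and $\f=1$ by definition.
\item If $\add(\U)>\omega$, then Proposition~\ref{omegapower}(i) gives $\f(\U^\omega)=\f(\U)$.
\item If $\add(\U)=\omega<\cof(\U)$, then Proposition~\ref{omegapower}(ii) gives $\min\{\mathfrak b,\f(\U)\}$.
\item If $\cof(\U)=\omega$, then Proposition~\ref{omegapower}(iii) gives $\mathfrak b$.
\end{itemize}
These four cases are exhaustive. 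If $\cof(\U)>\omega$, then $\add(\U)$ is either uncountable or equal to $\omega$, because $\add(\U)\neq1$ is always infinite and regular. If $\cof(\U)\le\omega$, then either $\cof(\U)=1$ or $\cof(\U)=\omega$, since a finite cofinality forces a largest element.

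The only point requiring care is the Tukey invariance of $\f$ when cofinalities differ. This does not happen here, because Tukey-equivalent posets have equal cofinality: composing cofinal maps shows $\cof(P)\le\cof(Q)$ and conversely. So the proof is essentially a direct combination of the cited results.
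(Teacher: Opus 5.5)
Your proof is correct and follows the paper's route: transfer via the Tukey equivalence $\Ne_e(A(X,\U))=_T\U^\omega$ of Lemma~\ref{3eq}, then read off the values from Proposition~\ref{omegapower}. Your explicit checks, that $\f$ is a Tukey invariant and that the case $\cof(\U)=1$ (where $\Delta\in\U$) has to be handled directly, fill in details the paper leaves implicit.
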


	By \cite[Theorems 2.9 and 3.15]{NT}, Nickolas and Tkachenko computed the character of the free topological group $F(X,\U)$ over an $\omega$-narrow uniform space, proving that
	\[
	\chi(F(X,\U))=\chi(A(X,\U))=\cof(\U^\omega).
	\]
	Related extensions and pcf evaluations were subsequently obtained by Chiș, Ferrer, Hernández and Tsaban \cite{CFHT}. These are character computations; the new issue considered below is whether the equality can be lifted to an equivalence of Tukey types.
	
	However, a careful inspection shows that their argument establishes an equality of characters, but not explicitly the stronger statement of Tukey equivalence. Before proving the above theorem, they first analyzed the case where $\U$ has a countable base. In fact, \cite[Lemma 3.9]{NT} proves:
	
	\medskip
	\emph{If $\cof(\U) \leq \omega$ for an $\omega$-narrow uniform space $(X,\U)$, then
		\[
		\chi(F(X,\U))\le \mathfrak d.
		\]}
	\medskip
	Moreover, one easily checks that equality holds whenever $(X,\U)$ is non-discrete, equivalently, whenever $\cof(\U)\neq 1$. As in the Abelian case, their proof actually yields the stronger statement:
	
	\begin{theorem}[Nickolas--Tkachenko]
		If $(X,\U)$ is an $\omega$-narrow uniform space with $\U=_T\omega$, then
		\[
		\Ne_e(F(X,\U))=_T\omega^\omega.
		\]
	\end{theorem}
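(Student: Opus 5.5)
We must prove $\Ne_e(F(X,\U))=_T\omega^\omega$ when $(X,\U)$ is $\omega$-narrow, $\U=_T\omega$, and (implicitly) non-discrete. The plan is to prove the two Tukey inequalities separately. Throughout, fix a decreasing base $U_0\supseteq U_1\supseteq\cdots$ of $\U$ with $2U_{n+1}\subseteq U_n$.

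\emph{Lower bound: $\omega^\omega\le_T\Ne_e(F(X,\U))$.}
We use the canonical continuous homomorphism $F(X,\U)\to A(X,\U)$ extending the identity of $X$. It is open, since it is a quotient of topological groups (the abelianization). Open continuous surjective homomorphisms give $\Ne_e(A(X,\U))\le_T\Ne_e(F(X,\U))$: the map $V\mapsto\pi(V)$ is order preserving and has cofinal image.

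Lemma~\ref{3eq} gives $\Ne_e(A(X,\U))=_T\U^\omega$. Since $\U=_T\omega$, Example~\ref{ex}(iv) makes $\U^\omega=_T\omega^\omega$; one may also argue directly with cofinal subsets. Non-discreteness is what ensures $\U\ne_T 1$, hence $\U=_T\omega$ genuinely. This direction is therefore routine.

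\emph{Upper bound: $\Ne_e(F(X,\U))\le_T\omega^\omega$.}
Here we follow the Nickolas--Tkachenko proof of \cite[Lemma 3.9]{NT}, upgrading the counting argument to a monotone assignment. For each $f\in\omega^\omega$, increasing without loss of generality, build an identity neighbourhood $W_f$ as in their construction: a union over $n$ of products of conjugated ``small'' words $g_k x_k^{\epsilon} y_k^{-\epsilon} g_k^{-1}$ with $(x_k,y_k)\in U_{f(k)}$. The conjugators $g_k$ range over the sets used in the $\omega$-narrowness argument.

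We must check three things:
\begin{itemize}
\item[(a)] each $W_f$ is a neighbourhood of $e$;
\item[(b)] $f\le g$ pointwise implies $W_g\subseteq W_f$, which is immediate because the entourages shrink;
\item[(c)] $\{W_f:f\in\omega^\omega\}$ is cofinal in $\Ne_e(F(X,\U))$.
\end{itemize}
Then $f\mapsto W_f$ is order preserving with cofinal image, giving the required Tukey reduction.

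\emph{Main obstacle.}
The hard step is (c) together with (a) when the $W_f$ are required to depend only on $f$. In the Nickolas--Tkachenko setting, a basic neighbourhood is determined by a uniform covering tree. It is given by a countable family of entourages indexed by finite sequences, with conjugators controlled by countable covers coming from $\omega$-narrowness.

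The first thing to try is to show that any such tree is refined by one whose entourage at a node of depth $k$ depends only on $k$, namely $U_{f(k)}$. This is the ``levelwise'' normalization. It uses the countability of $\U$'s base to dominate the countably many nodes at each level by a single $U_{f(k)}$. Note that the number of nodes per level is countable, by $\omega$-narrowness and the countable base, so a maximum index exists via $\omega^\omega$-domination of countably many functions level by level.

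If that normalization succeeds, each neighbourhood contains some $W_f$ and (c) follows. Finally, composing the two inequalities yields $\Ne_e(F(X,\U))=_T\omega^\omega$.
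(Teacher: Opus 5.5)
\textbf{There is a genuine gap in your upper bound.}

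Your lower bound is the paper's argument: the quotient $F(X,\U)\to A(X,\U)$ together with Lemma~\ref{3eq}. Two small corrections there: $\U^\omega=_T\omega^\omega$ follows coordinatewise from $\U=_T\omega$, not from Example~\ref{ex}(iv), which concerns finite products; and $\U=_T\omega$ already excludes discreteness.

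The gap is exactly at the step you flag. Your levelwise normalization is the neat-tree refinement of Claim~\ref{c2}. That argument works only because every level has fewer than $\add(\U)$ nodes. Here $\add(\U)=\omega$, while for a non-precompact $\omega$-narrow space the levels are countably infinite. Countably many indices $m_s\in\omega$ need not have a maximum, and $\le^*$-domination of countably many functions gives no bound at a fixed level. So this route only recovers the precompact case, which Theorem~\ref{main} already covers.

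In general the normalization is false, and so is your description of $W_f$ with $(x_k,y_k)\in U_{f(k)}$ independent of the conjugator. For example, take $X=\mathbb{R}$ and $\phi(x)=\bigl(a(x),b(x)\bigr)$ in the affine group $\{t\mapsto at+b:a>0\}$, with $a(x)=1/(1+|x|)$ and $b(x)=\int_0^x a$. A direct computation shows that $\phi$ is uniformly continuous for the two-sided uniformity. However, for $y=0$ and fixed $z=\eta>0$, the translation part of $\phi(x)^{-1}\phi(y)^{-1}\phi(z)\phi(x)$ tends to infinity as $x\to\infty$. Now suppose all covers $C_S(s)$ with $\lh(s)=1$ share a Lebesgue entourage. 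Then for small $\eta$ we have $x^{-1}y^{-1}zx\in W_S$ for every $x\in\mathbb{R}$. Hence no such $O_P$ lies in the pullback of a small identity neighbourhood of the affine group. The admissible entourage has to shrink as the conjugator moves off to infinity.

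\textbf{The missing idea.} The entourage must be allowed to depend on the node, not only on its depth. Countability should be used for the set of all nodes, not to bound each level. Concretely:
\begin{itemize}
\item Fix a decreasing base $(U_m)$ and, using $\omega$-narrowness, a countable uniform cover $\gamma_m$ at scale $U_m$ for each $m$.
\item The set $N$ of finite sequences from $\bigcup_m\gamma_m$ is countable, so node-wise index assignments $d\in\omega^N$ form a poset order-isomorphic to $\omega^\omega$.
\item The recursion of Claim~\ref{c2} can then be run choosing the index separately at each node, with no level-wide bound.
\item Making $d\mapsto O_{P(d)}$ monotone needs some extra bookkeeping, since a node of the finer tree is matched with a different node of the coarser one.
\end{itemize}

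The paper's own proof does not attempt any normalization. It imports from \cite[Lemma 3.9]{NT} a poset $D\cong\omega^\omega$ and sequences $P(d)\in\UT(X)^\omega$ with $O_{P(d)}$ decreasing in $d$ and cofinal in $\Ne_e(F(X,\U))$. This gives $\Ne_e(F(X,\U))\le_T\omega^\omega$ directly.
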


	\begin{proof}
		The proof of \cite[Lemma 3.9]{NT}, especially parts (II)--(V), gives more than the stated character bound. Let $D$ denote the poset appearing in equation (3.6) there; it is order-isomorphic to $\omega^\omega$. The constructions in those parts assign to each $d\in D$ a sequence $P(d)$ of uniform covering trees such that
		$
		d\leq d'\quad\Longrightarrow\quad O_{P(d)}\supseteq O_{P(d')},
		$
		and every identity neighborhood contains $O_{P(d)}$ for some $d\in D$. Thus $d\mapsto O_{P(d)}$ is an order-preserving map from $D$ to $\Ne_e(F(X,\U))$, ordered by reverse inclusion, with cofinal image. Hence
		$
		\Ne_e(F(X,\U))\le_T\omega^\omega.
		$
		Conversely, $A(X,\U)$ is a quotient of $F(X,\U)$, so Lemma~\ref{3eq} gives
		$
		\Ne_e(F(X,\U))\ge_T\Ne_e(A(X,\U))=_T\U^\omega=_T\omega^\omega.
		$
	\end{proof}
	
	This naturally leads to the question whether their cofinality formula can be strengthened to a Tukey equivalence.
	
	\begin{question}\label{MainQ}
		Let $(X,\U)$ be an $\omega$-narrow uniform space. Must
		\[
		\Ne_e(F(X,\U))=_T \U^\omega?
		\]
	\end{question}
	
	Our next goal is to give a positive answer for a class of uniform spaces that includes all compact spaces, by modifying the proof of Nickolas and Tkachenko. We first recall the required uniform-covering-tree machinery.
	
	We begin with some basic notions from combinatorial set theory. Let $A$ be a nonempty set and let $n\in\omega\setminus\{0\}$. We denote by $A^n$ the set of sequences
	\[
	s=(s(0),s(1),\dots,s(n-1))
	\]
	of length $n$ with values in $A$, viewed as functions from $n$ to $A$.
	
	For $n=0$, we let $A^0=\{\emptyset\}$, the singleton consisting of the \emph{empty sequence}. The length of a finite sequence $s$ is denoted by $\lh(s)$, so in particular $\lh(\emptyset)=0$.
	
	If $s\in A^n$ and $m\leq n$, we write
	\[
	s\restriction m=(s(0), s(1), \dots, s(m-1))
	\]
	(hence $s\restriction 0=\emptyset$).
	We say that $t$ is an \emph{initial segment} of $s$, and that $s$ is an \emph{extension} of $t$, if $t=s\restriction m$ for some $m\leq \lh(s)$.
	
	If $s\in A^n$ and $a\in A$, we denote by
	\[
	s^\frown a=(s(0), s(1), \dots, s(n-1), a)\in A^{n+1}
	\]
	the concatenation of $s$ with $a$. In particular, $\emptyset^\frown a=(a)\in A^1$.
	
	As usual, we denote by $A^{<\omega}$ the set of all finite sequences from $A$, i.e.,
	\[
	A^{<\omega}=\bigcup_{n\in\omega} A^n.
	\]
	
	A \emph{tree} on $A$ is a subset $T\subseteq A^{<\omega}$ closed under taking initial segments. 
	
	\begin{definition}\label{def}
		Let $(X,\U)$ be a uniform space, and denote by $\mathcal{T}$ the family of all nonempty open subsets of $X$. A \emph{uniform covering tree} is a tree $T$ on $\mathcal{T}$ such that for every $s\in T$, the set
		\[
		\{A\in \mathcal{T}: s^\frown A\in T\}
		\]
		is a uniform cover of $(X,\U)$. 
		
		If $T$ is a uniform covering tree, then for each $t\in T$, we set
		\[C_T(t)=\{A\in \mathcal{T}: t^\frown A\in T\}.\]
	\end{definition}
	
	We denote by $\UT(X,\U)$ (or simply $\UT(X)$) the set of all uniform covering trees. For each nonzero $n\in\omega$ and $s\in \mathcal{T}^n$, define a subset $W_s$ of $F(X,\U)$ by
	\[
	W_s:=\bigcup_{\varepsilon\in\{-1,1\}^n}
	s(0)^{-\varepsilon(0)}\cdots s(n-1)^{-\varepsilon(n-1)}s(n-1)^{\varepsilon(n-1)}\cdots s(0)^{\varepsilon(0)}.
	\]
	For $T\in \UT(X)$, set $W_T=\bigcup_{s\in T} W_s$.
	
	Finally, for a sequence $P=(P_0,P_1,\dots)\in \UT(X)^\omega$, define
	\[
	O_P=\bigcup_{n\in\omega\setminus\{0\}}\ \bigcup_{\pi\in \sym(n)} 
	W_{P_{\pi(0)}}\cdots W_{P_{\pi(n-1)}}.
	\]
	
	\begin{theorem}\cite[Theorem 3.6]{NT}
		For every uniform space $(X,\U)$, the family $\Sigma:=\{O_P: P\in \UT(X,\U)^\omega\}$ forms a local base at $e$ of the topological group $F(X,\U)$.
	\end{theorem}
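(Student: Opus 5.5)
The statement to prove is that $\Sigma=\{O_P:P\in\UT(X,\U)^\omega\}$ is a local base at $e$ in $F(X,\U)$. This splits into two halves:
\begin{itemize}
\item[(a)] every $O_P$ is a neighbourhood of $e$;
\item[(b)] every neighbourhood of $e$ contains some $O_P$.
\end{itemize}
The natural route is to define a topology from $\Sigma$ and compare it with the free one.

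\emph{Step 1: $\Sigma$ generates a group topology.} The plan is to check the standard conditions for a neighbourhood base at the identity of a group topology.
\begin{itemize}
\item[(1)] Every $O_P$ contains $e$ and is symmetric. This is automatic, since each $W_s$ is a union of conjugates of sets $A^{-1}A$ by words and is closed under inversion.
\item[(2)] For each $P$ there is $Q$ with $O_Q O_Q\subseteq O_P$. Split the sequence $P$ into its even and odd coordinates: put $Q_{2n}$ and $Q_{2n+1}$ both to be refinements of $P_n$ obtained by intersecting the trees levelwise. Then a product of two words from $O_Q$ is a product over disjoint index sets, which after a permutation lies in $O_P$. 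The built-in $\sym(n)$ in the definition of $O_P$ is designed exactly for this.
\item[(3)] For each $g$ and $P$ there is $Q$ with $gO_Qg^{-1}\subseteq O_P$. Writing $g$ letter by letter, it suffices to treat $g=x^{\pm1}$. Here I would take a tree whose root level is a uniform cover by sets $A$ small enough that $x$ lies in a member of $P_0$'s root cover, and then shift the tree by one level. The conjugating letter is absorbed into the outermost layer of $W_s$, namely $s(0)^{-\varepsilon}\cdots s(0)^{\varepsilon}$.
\end{itemize}
Hausdorffness is not needed, since we compare this topology with the free one.

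\emph{Step 2: the identity map $X\to(F,\tau_\Sigma)$ is uniformly continuous.} Given $P$, take the root cover $C_{P_0}(\emptyset)$. It is refined by balls $B(x,V)$ with $V\in\U$. If $(x,y)\in V$, then $x,y$ lie in a common member $A$ of this cover, so $x^{-1}y\in A^{-1}A\subseteq W_{(A)}\subseteq O_P$. The same argument gives $yx^{-1}$, so the two-sided uniformity condition holds. By the universal property of $F(X,\U)$, the topology $\tau_\Sigma$ is coarser than the free topology. This proves (a), because each $O_P$ is then open, or at least a neighbourhood of $e$, in $F(X,\U)$.

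\emph{Step 3 (main obstacle): every neighbourhood $U$ of $e$ in $F(X,\U)$ contains some $O_P$.} I would use the standard description of the free topology: choose a sequence of symmetric neighbourhoods $U_n$ with $U_0\subseteq U$ and $U_{n+1}^2\subseteq U_n$. This is possible because uniform continuity of $X\to F(X,\U)$ gives, for each neighbourhood $V$ of $e$, an entourage $E\in\U$ with $x^{-1}y,\ yx^{-1}\in V$ whenever $(x,y)\in E$.

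The trees are then built recursively. At a node $s$ of length $k$, the word $w_s=s(0)^{\varepsilon}\cdots$ is fixed up to the choice of points. Continuity of conjugation by the finitely many relevant letters at a fixed point is not uniform in the letters, and this is exactly why trees are used: the cover at node $s$ may depend on $s$. So the children cover at $s$ is chosen to be a uniform cover by sets $A$ such that $g^{-1}A^{-1}Ag\subseteq U_{m}$ for all $g$ in the product of the earlier sets.

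The difficulty is that the earlier sets are open sets, not points. To handle this, I would shrink each set so that the corresponding conjugate, taken over the whole set, stays in a slightly larger neighbourhood. This should follow from joint continuity of multiplication and a compactness-free argument using the entourage-level estimates $x^{-1}y\in V$ applied at each letter, and it telescopes along the branch. Finally, tree $P_n$ is made to land in $U_{n+1}$, and the $\sym(n)$-products are controlled by $U_1U_2\cdots U_n\subseteq U_0$, using the symmetry of the $U_n$ in arbitrary order. This bookkeeping is the delicate part.
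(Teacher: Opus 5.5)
Your Steps 1 and 2 are a workable route to half (a), modulo the slips listed at the end. Step 3, however, is the real content of the theorem (the paper only quotes it from Nickolas--Tkachenko), and it is not carried out; the mechanism you sketch for it is not viable as stated.

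You want the cover at a node $s$ of length $k$ to consist of sets $A$ with $g^{-1}A^{-1}Ag\subseteq U_m$ for \emph{every} $g$ in the product $s(k-1)^{\pm1}\cdots s(0)^{\pm1}$ of the earlier open sets, with $U_m$ depending only on the level. This needs a single neighbourhood $M$ of $e$ with $g^{-1}Mg\subseteq U_m$ for all $g$ in a set fixed at earlier levels, while the targets $U_m$ keep shrinking. Uniform control of inner automorphisms of this kind is what compactness of the conjugating set would give. For arbitrary open subsets of a uniform space nothing in your sketch provides it, and you explicitly want a compactness-free argument.

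Shrinking $s(0)$ at level $0$ cannot help, because $s(0)$ would have to be small relative to all later $U_m$ at once. Writing $y^{\varepsilon}=a^{\varepsilon}n$ with $n$ in a fixed neighbourhood only turns the problem into conjugating ever smaller sets by the elements of a fixed set. So ``shrink each set so that the conjugate stays in a slightly larger neighbourhood \dots\ it telescopes'' does not close the gap.

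The missing idea is to let the target depend on the node and to conjugate only by one chosen point per child.
\begin{itemize}
\item Attach to each node $s$ a neighbourhood $U_s$ of $e$, with $U_\emptyset$ the given one.
\item Pick symmetric neighbourhoods $N_s,M_s$ with $N_sM_sN_s\subseteq U_s$, and an open $E_s\in\U$ such that $x^{-\delta}y^{\delta}\in N_s$ whenever $(x,y)\in E_s$ and $\delta=\pm1$.
\item Put $C_T(s)=\{B(a,E_s):a\in X\}$, fix a centre $a_A$ of each child $A$, and let $U_{s^\frown A}$ be a neighbourhood $Z$ of $e$ with $a_A^{-\delta}Za_A^{\delta}\subseteq M_s$.
\end{itemize}
The identity $x^{-\delta}zy^{\delta}=(x^{-\delta}a_A^{\delta})(a_A^{-\delta}za_A^{\delta})(a_A^{-\delta}y^{\delta})$ gives $A^{-\delta}U_{s^\frown A}A^{\delta}\subseteq U_s$. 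Peeling $W_t$ from its innermost letter outwards, using $e\in U_t$, then gives $W_T\subseteq U_\emptyset$. The covers stay uniform because $E_s$ depends only on $U_s$. The dependence on the point $a_A$ is pushed one level down, which is exactly what the tree structure allows.

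Your final bookkeeping also needs repair. The condition $U_{n+1}^2\subseteq U_n$ controls only $U_1U_2\cdots U_n$ in increasing order. Symmetry of the $U_n$ says nothing about the arbitrary orderings that occur in $O_P$. Take instead $U_{n+1}^3\subseteq U_n$ and $W_{P_n}\subseteq U_{n+1}$, and induct on the number of factors by splitting at the least index.

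There are smaller slips in Step 1:
\begin{itemize}
\item In (2) the indexing is reversed. $Q_n$ must refine both $P_{2n}$ and $P_{2n+1}$, with unused indices filled by $e\in W_{P_k}$. As written, one element of $O_Q$ may use $W_{P_n}$ twice.
\item In (3) the shift must be done in every coordinate: replace $P_j$ by $\{t:(A_j)^\frown t\in P_j\}$, where $x\in A_j\in C_{P_j}(\emptyset)$.
\item $W_s$ is not a union of conjugates of $A^{-1}A$, since its left and right letters are chosen independently. It is still symmetric, so the conclusion of (1) survives.
\end{itemize}
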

	
	We are now ready to give the final result.
	
	\begin{definition}
		Let $\tau$ be an infinite cardinal. A uniform space $(X,\U)$ is called \emph{$\tau$-precompact} if for every $U\in \U$ there exists a subset $A\subseteq X$ with $|A| < \tau$ such that
		\[
		X=\bigcup_{a\in A}B(a,U).
		\]
	\end{definition}
	
	By definition, precompactness is the same as $\omega$-precompactness, while $\omega$-narrowness is equivalent to $\omega_1$-precompactness.
	
	\begin{theorem}\label{main}
		Let $(X,\U)$ be a non-discrete uniform space. If $(X,\U)$ is $\add(\U)$-precompact, then
		\[
		\Ne_e(F(X, \U))=_T \U^\omega.
		\]
	\end{theorem}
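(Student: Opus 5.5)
The plan is to prove the two Tukey inequalities separately. The lower bound is easy. $A(X,\U)$ is a quotient of $F(X,\U)$ via the canonical open homomorphism, so $\Ne_e(A(X,\U))\le_T\Ne_e(F(X,\U))$. Lemma~\ref{3eq} then gives $\U^\omega=_T\Ne_e(A(X,\U))\le_T\Ne_e(F(X,\U))$. This direction uses no precompactness. For the upper bound I will build an order-preserving map from $\U^{\omega\times\omega}$ to $\Ne_e(F(X,\U))$ with cofinal image. Since $\U^{\omega\times\omega}\cong\U^\omega$, this gives $\Ne_e(F(X,\U))\le_T\U^\omega$, and Dedekind completeness of $\Ne_e(F(X,\U))$ justifies the use of order-preserving maps.

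\emph{Neat trees.} Given $\mathbf U=(U_n)_{n\in\omega}\in\U^\omega$, let $N(\mathbf U)$ be the tree of all sequences $(A_0,\dots,A_{n-1})$ with each $A_i=\operatorname{int}B(x_i,U_i)$ for some $x_i\in X$. To guarantee that these interiors still cover $X$, I will index by entourages $V$ with $2V\subseteq U_i$, using $B(x,V)\subseteq\operatorname{int}B(x,2V)$; this is a routine adjustment. With that adjustment $N(\mathbf U)$ is a uniform covering tree, and it is levelwise uniform: every node of length $n$ has the same cover.

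Shrinking the entourages shrinks every ball. Also $W_s$ is monotone under coordinatewise inclusion of the sets $s(i)$. Hence $\mathbf U\mapsto N(\mathbf U)$, followed by $T\mapsto W_T$, is order-preserving. So is the map sending $(U_{k,n})_{k,n}$ to $O_P$ with $P_k=N((U_{k,n})_n)$.

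\emph{Cofinality: pruning and domination.} This is the main obstacle, and the only place where $\add(\U)$-precompactness is used. Let $\kappa=\add(\U)$. Since $X$ is non-discrete, $\U$ has no largest element, so $\kappa\ge\omega$ is regular, and intersections of fewer than $\kappa$ entourages are entourages.

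Fix an arbitrary $T\in\UT(X)$. I will build a subtree $T'\subseteq T$ level by level, together with entourages $V_n\in\U$, such that:
\begin{itemize}
\item[(a)] level $n$ of $T'$ has fewer than $\kappa$ nodes;
\item[(b)] for each node $t\in T'$ of length $n$, every ball $B(x,V_n)$ lies inside some $A$ with $t^\frown A\in T'$.
\end{itemize}
The construction at a node $t$ goes as follows. The cover $C_T(t)$ is uniform, so it is refined by the balls of some entourage $E_t$. Choose $E'_t\in\U$ with $2E'_t\subseteq E_t$. By $\kappa$-precompactness, fewer than $\kappa$ points $a$ have balls $B(a,E'_t)$ covering $X$. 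For each such $a$ keep one member of $C_T(t)$ containing $B(a,E_t)$. Any ball $B(x,E'_t)$ with $x\in B(a,E'_t)$ lies inside $B(a,E_t)$. So the kept subfamily is a uniform cover of size less than $\kappa$, refined by $E'_t$-balls.

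By induction, each level of $T'$ has fewer than $\kappa$ nodes. When $\kappa=\omega$ this means finitely many nodes, since $\omega$-precompactness yields finite subcovers and a finite product of finite numbers is finite. When $\kappa>\omega$ it follows from regularity of $\kappa$. I then set $V_n=\bigcap\{E'_t:\lh(t)=n,\ t\in T'\}\in\U$, which is an entourage because it is an intersection of fewer than $\kappa$ entourages.

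Now every node $s=(A_0,\dots,A_{n-1})$ of $N((V_i)_i)$ is dominated coordinatewise by a node of $T'$. To see this, choose $t(0)\supseteq A_0$ at the root, then $t(1)\supseteq A_1$ among the children of $t\restriction1$, and so on; each step is possible by (b). Consequently $W_{N((V_i))}\subseteq W_{T'}\subseteq W_T$.

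Applying this to each $P_k$ in an arbitrary $P\in\UT(X)^\omega$ gives $O_{N(\cdot)}\subseteq O_P$ for a suitable element of $\U^{\omega\times\omega}$. By \cite[Theorem 3.6]{NT} the sets $O_P$ form a local base at $e$, so the image of our map is cofinal in $\Ne_e(F(X,\U))$. This completes the upper bound, and with the lower bound it gives the Tukey equivalence.
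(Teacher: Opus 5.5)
Your argument is correct. The architecture is the paper's: the lower bound comes from the quotient $A(X,\U)$ and Lemma~\ref{3eq}, and the upper bound comes from levelwise-uniform (``neat'') trees. In both, each level has fewer than $\add(\U)$ relevant nodes, so that a single entourage can serve a whole level.

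The difference is where the smallness is placed.

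The paper first passes to the poset $\Gamma$ of small open covers $\gamma_U=\{B(y,2U):y\in Y_U\}$, each of size $<\add(\U)$, and proves $\Gamma=_T\U$. It defines neat trees as those whose level-$n$ cover is a fixed $\delta_n\in\Gamma$, so their levels are automatically small. For a given $T\in\UT(X)$ it then builds a neat $S$ together with a length-preserving map $h:S\to T$. Each $\delta_n$ is chosen as a common refinement, using $\add(\Gamma)=\add(\U)$.

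You instead index neat trees directly by $\U^\omega$ through full ball covers, whose levels are large, and put the smallness on the other side. You prune $T$ to a subtree $T'$ with small levels, which is where precompactness enters. You then intersect the witnessing entourages levelwise and recover domination afterwards by a greedy descent through $T'$.

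What each route buys:
\begin{itemize}
\item Yours bypasses $\Gamma$, the detour through $\operatorname{Cov}(\U)$, and the step $\Gamma^\omega=_T\U^\omega$. It gives an explicit order-preserving cofinal map $\U^{\omega\times\omega}\to\Ne_e(F(X,\U))$.
\item The paper's produces a local base consisting of trees that are themselves small.
\end{itemize}

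Two cosmetic remarks. First, the sets $\operatorname{int}B(x,U)$ cover $X$ automatically, since each contains $x$. The role of $V$ with $2V\subseteq U$ is only to make that cover uniform. Second, Dedekind completeness is not needed: an order-preserving map with cofinal image is already a cofinal map.
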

	
	\begin{proof}
		All countable powers in this proof are ordered coordinatewise. Recall that every entourage contains an open entourage, so the family $\mathcal U_o$ of open entourages is cofinal in $\mathcal U$. Put $\tau=\add(\mathcal U)$. Since $(X,\mathcal U)$ is non-discrete, $\tau$ is infinite. For each $U\in\mathcal U_o$, choose $Y_U\subseteq X$ such that $|Y_U|<\tau$ and
		\[
		X=\bigcup_{y\in Y_U}B(y,U).
		\]
		Set $\gamma_U=\{B(y,2U):y\in Y_U\}$. Every $U$-ball is contained in some member of $\gamma_U$. Moreover,
		$
		B(y,2U)=\bigcup_{z\in B(y,U)}B(z,U),
		$
		so the members of $\gamma_U$ are open. Thus $\gamma_U$ is an open uniform cover and $|\gamma_U|<\tau$.
		
		Let $\operatorname{Cov}(\mathcal U)$ be the family of all uniform covers of $(X,\mathcal U)$. For $\mathcal C,\mathcal D\in\operatorname{Cov}(\mathcal U)$, write $\mathcal C\le_{\mathrm r}\mathcal D$ if $\mathcal D$ refines $\mathcal C$. Let
		$
		\Gamma=\{\gamma_U:U\in\mathcal U_o\},
		$
		with the order inherited from $\operatorname{Cov}(\mathcal U)$. Thus equal covers obtained from different entourages are identified.
		
		\begin{claim}\label{c0}
			$\Gamma=_T\mathcal U$.
		\end{claim}
		
		\begin{proof}[Proof of Claim~\ref{c0}]
			For $U\in\mathcal U$, let $\beta_U=\{B(x,U):x\in X\}$. The map $U\mapsto\beta_U$ is order-preserving and has cofinal image in $\operatorname{Cov}(\mathcal U)$.

			For $\mathcal C\in\operatorname{Cov}(\mathcal U)$, put
			$
			E_{\mathcal C}=\bigcup\{A\times A:A\in\mathcal C\}.
			$
			Since some ball cover refines $\mathcal C$, the set $E_{\mathcal C}$ contains an entourage and hence belongs to $\mathcal U$. The map $\mathcal C\mapsto E_{\mathcal C}$ is order-preserving. If $U\in\mathcal U$, choose $V\in\mathcal U$ with $2V\subseteq U$. Then $E_{\beta_V}\subseteq2V\subseteq U$. Hence this map also has cofinal image, and therefore $\operatorname{Cov}(\mathcal U)=_T\mathcal U$.

			It remains to note that $\Gamma$ is cofinal in $\operatorname{Cov}(\mathcal U)$. Let $\mathcal C$ be a uniform cover. Choose $E\in\mathcal U$ such that $\beta_E$ refines $\mathcal C$. Next choose $V\in\mathcal U$ with $2V\subseteq E$, and then $U\in\mathcal U_o$ with $U\subseteq V$. Since $\gamma_U$ refines $\beta_E$, it also refines $\mathcal C$. Thus $\Gamma$ is a cofinal subset of $\operatorname{Cov}(\mathcal U)$, and the claim follows.
		\end{proof}
		
		By Claim~\ref{c0}, $\add(\Gamma)=\tau$. Call a uniform covering tree $T$ \emph{neat} if
		\[
		C_T(s)=C_T(t)\in\Gamma
		\]
		whenever $s,t\in T$ have the same length, where $C_T(s)$ is as defined in Definition~\ref{def}. Let $\NT$ be the collection of all neat trees. For each $T\in\NT$, define $f_T\in\Gamma^\omega$ by $f_T(n)=C_T(s)$ for any $s\in T$ of length $n$. This gives a bijection between $\NT$ and $\Gamma^\omega$. We give $\NT$ the corresponding product order.
		
		\begin{claim}\label{c2}
			The family
			\[
			\Sigma^*=\{O_P:P\in\NT^\omega\}
			\]
			is a local base at $e$ in $F(X,\U)$.
		\end{claim}
		
		\begin{proof}[Proof of Claim~\ref{c2}]
			It is enough to show that for every $T\in\UT(X)$ there exists $S\in\NT$ such that $W_S\subseteq W_T$.
			
			Fix $T\in\UT(X)$. We construct a tree $S$, a map $h:S\to T$, and a sequence $(\delta_n)_{n\in\omega}\in\Gamma^\omega$ simultaneously. The map $h$ will preserve lengths and initial segments, and the construction will satisfy
			\[
			C_S(s)=\delta_{\lh(s)}
			\quad\text{and}\quad
			s(i)\subseteq h(s)(i)\quad(i<\lh(s))
			\tag{$*$}
			\]
			for every $s\in S$.
			
			Let $h(\emptyset)=\emptyset$. Suppose that $S$ and $h$ have been defined through level $n$, and that $\delta_0,\ldots,\delta_{n-1}$ have been chosen so that the first condition in $(*)$ holds below level $n$. Then the $n$-th level of $S$ is
			\[
			L_n=\{s\in S:\lh(s)=n\}=\prod_{i<n}\delta_i,
			\]
			where the empty product is $L_0=\{\emptyset\}$. Since every $\delta_i\in\Gamma$ has cardinality less than $\tau$, and $\tau$ is infinite, it follows that $|L_n|<\tau$.
			
			For every $s\in L_n$, choose $\delta_s\in\Gamma$ which refines $C_T(h(s))$. The family $\{\delta_s:s\in L_n\}$ has cardinality less than $\add(\Gamma)$ and is therefore bounded in $\Gamma$. Hence there is $\delta_n\in\Gamma$ which refines every $\delta_s$. Put $C_S(s)=\delta_n$ for all $s\in L_n$. For each $s\in L_n$ and $A\in\delta_n$, choose $\widehat{A}_s\in C_T(h(s))$ such that $A\subseteq\widehat{A}_s$, and set
			$
			h(s^\frown A)=h(s)^\frown\widehat{A}_s.
			$
			This completes the construction at level $n+1$.
			
			The first condition in $(*)$ shows that $S$ is neat. The second one gives $W_s\subseteq W_{h(s)}\subseteq W_T$ for every $s\in S$. Hence $W_S\subseteq W_T$, as required.
		\end{proof}
		
		Suppose that $T,S\in\NT$ and $T\le S$. For every $s\in S$, one can choose $t\in T$ of the same length such that $s(i)\subseteq t(i)$ for each coordinate. Hence $W_T\supseteq W_S$.
		
		Consequently, if $P_1\le P_2$ in $\NT^\omega$, then $O_{P_1}\supseteq O_{P_2}$. Thus the map $P\mapsto O_P$ is order-preserving and, by Claim~\ref{c2}, has cofinal image. Taking countable coordinatewise powers in Claim~\ref{c0} gives $\Gamma^\omega=_T\mathcal U^\omega$. Therefore,
		\[
		\Ne_e(F(X,\U))
		\le_T
		\NT^\omega
		\cong
		(\Gamma^\omega)^\omega
		\cong
		\Gamma^\omega
		=_T
		\U^\omega.
		\]
		
		For the reverse inequality, note that $A(X,\U)$ is a quotient group of $F(X,\U)$. Hence, by Lemma~\ref{3eq},
		\[
		\Ne_e(F(X,\U))
		\ge_T
		\Ne_e(A(X,\U))
		=_T
		\U^\omega.
		\]
		This completes the proof.
	\end{proof}

The following result shows that the uniform structure of an infinite precompact space has the largest possible cofinal type among directed sets of the same weight.
\begin{lemma}\label{FG}\cite{FG} Let $(X, \U)$ be an infinite precompact uniform space of weight $\tau$. Then $\U=_T [\tau]^{<\omega}$.\end{lemma}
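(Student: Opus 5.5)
We must prove $\U=_T[\tau]^{<\omega}$ for an infinite precompact uniform space of weight $\tau$. The plan is to establish the two Tukey inequalities separately. The upper bound should be soft; the lower bound is where precompactness is used.

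\emph{Upper bound.} Since $\cof(\U)=w(\U)=\tau$, we need $\tau$ uncountable to apply Example~\ref{ex}(v). If $\tau$ is uncountable, Example~\ref{ex}(v) gives $\U\le_T[\tau]^{<\omega}$ directly.

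If $\tau=\omega$, then $\U$ is non-discrete, since an infinite precompact space cannot be uniformly discrete. So $\U$ has countable cofinality and is not $=_T 1$, hence $\U=_T\omega$ by Example~\ref{ex}(i). Also $[\omega]^{<\omega}=_T\omega$, so both inequalities hold trivially in this case. (The weight of an infinite precompact separated space is infinite.) From now on assume $\tau>\omega$.

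\emph{Lower bound.} We need $[\tau]^{<\omega}\le_T\U$, i.e.\ a family $\{U_\alpha:\alpha<\tau\}\subseteq\U$ such that every infinite subfamily is unbounded. Concretely, no entourage $V\in\U$ should be contained in infinitely many $U_\alpha$. Then $\alpha\mapsto U_\alpha$ sends infinite (= unbounded) subsets of $[\tau]^{<\omega}$ to unbounded subsets of $\U$, and extending the map to finite sets via intersections gives an unbounded map.

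The construction goes through the uniform embedding of $(X,\U)$ into its completion, which is compact. This is the step that genuinely uses precompactness; without it, e.g.\ for $\omega^\omega$-type uniformities, the lower bound fails. The compact completion $K$ has weight $\tau$ and embeds in $[0,1]^\tau$, so $\U$ is generated by $\tau$ uniformly continuous pseudometrics $d_\alpha(x,y)=|f_\alpha(x)-f_\alpha(y)|$ with $f_\alpha:X\to[0,1]$. Since $\tau$ is the minimal weight, we can thin this family to a subfamily of size $\tau$ that is ``independent'' enough.

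Put $U_\alpha=\{(x,y):d_\alpha(x,y)<\varepsilon_\alpha\}$ with suitable $\varepsilon_\alpha$. Suppose some $V\in\U$ lies in $U_{\alpha_n}$ for infinitely many $n$. By precompactness, $V$ has a finite $V$-net, hence $V$ contains a basic entourage determined by finitely many coordinates $d_{\beta_1},\dots,d_{\beta_k}$ and some $\delta>0$. So finitely many coordinates, at a fixed scale, would uniformly control infinitely many coordinates $f_{\alpha_n}$. A counting argument then contradicts minimality of weight: for each finite set $F\subseteq\tau$ and rational $\delta$, let $S(F,\delta)$ be the set of $\alpha$ such that $U_\alpha\supseteq V_{F,\delta}$. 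We choose the $\alpha$'s by transfinite recursion of length $\tau$, always avoiding the coordinates controlled by earlier finite sets.

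The main obstacle is making this recursion succeed. We need each set of ``controlled'' coordinates $\bigcup_\delta S(F,\delta)$ not to exhaust $\tau$, and in fact we need each such set to be small. I would first try to show that the entourages uniformly controlled by a fixed finite $F$ form a uniformity of countable weight. That uniformity is metrizable, coming from the compact metric space $\prod_{\beta\in F}[0,1]$, so it adds only countably many genuinely new entourages. The $\tau$-weight assumption should then let the recursion pick $U_\alpha$ outside the union of fewer than $\tau$ such countably generated pieces. The delicate part is handling singular $\tau$ and the stage-by-stage closure. An alternative I would keep in reserve is the combinatorial characterization via Isbell/Todorcevic: a directed set of cofinality $\tau$ is $=_T[\tau]^{<\omega}$ iff it has a $\tau$-sized subset all of whose infinite subsets are unbounded.
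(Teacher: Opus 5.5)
The paper does not prove this lemma; it quotes it from \cite{FG}. Your proposal therefore has to stand on its own, and it has a genuine gap.

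What works: the upper bound via Example~\ref{ex}(v), the case $\tau=\omega$, and your reduction of the lower bound are all correct. The reduction is that you need $\{U_\alpha:\alpha<\tau\}\subseteq\mathcal U$ such that no entourage is contained in infinitely many $U_\alpha$.

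What is missing: the construction of such a family, which is the entire content of the lemma, and the recursion you sketch cannot supply it. Read ``earlier finite sets'' either way: as finite sets of previously chosen coordinates, or as the first $\xi$ pairs $(F,\delta)$ in an enumeration. In both readings the recursion only guarantees that a given $V_{F,\delta}$ lies in chosen $U_\alpha$'s coming from an initial segment of the construction. That is a set of size $<\tau$, not a finite set. Under the first reading it says nothing at all about $F$ containing coordinates chosen later or never chosen. Under the second reading even the feasibility of each step ($\bigcup_{\eta<\xi}S(F_\eta,\delta_\eta)\neq\tau$) is unproved. Likewise, the fact that the uniformity generated by $f_F$ has countable weight does not bound how many $U_\alpha$ belong to it. ``Minimality of weight'' yields no counting argument either: you need all $\tau$ many sets $S(F,\delta)$ to meet the chosen family in finite sets simultaneously.

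Even the natural invariant ``each chosen $U_{\alpha_\xi}$ lies outside the subuniformity generated by the earlier chosen functions'' is insufficient. Work on $[-1,1]$, or on $[-1,1]\times 2^\tau$ to get weight $\tau$. Take bumps $b_n$ with $0\le b_n\le 1$, $b_n(s_n)=1$ and pairwise disjoint supports in $(1/8,5/8)$. Put $h_n(t)=b_n(t)-b_n(-t)$ and $U_n=\{(x,y):|h_n(x)-h_n(y)|<3/2\}$.
\begin{itemize}
\item For $m\neq n$ we have $h_m(\pm s_n)=0$, while $h_n(s_n)-h_n(-s_n)=2$. So $(s_n,-s_n)$ lies in every entourage generated by $h_0,\dots,h_{n-1}$ but not in $U_n$; hence no $U_n$ belongs to that uniformity.
\item Nevertheless $\{(x,y):|x-y|<1/4\}\subseteq U_n$ for every $n$. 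A pair violating $U_n$ must have its two points in opposite supports, which are more than $1/4$ apart.
\end{itemize}
So one coordinate that your recursion never examines bounds infinitely many chosen entourages. This failure occurs exactly at limits of countable cofinality.

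What a correct construction needs is a rigidity property of the chosen family. In the zero-dimensional case you can take distinct clopen sets $A_\xi$ of the compact completion $K$ and put $U_\xi=A_\xi^2\cup(K\setminus A_\xi)^2$. The entourage $\bigcup_{P\in\mathcal P}P^2$ of a finite clopen partition $\mathcal P$ is contained in $U_\xi$ only when $A_\xi$ is a union of cells of $\mathcal P$, and there are only finitely many such unions. For arbitrary compacta, producing such a family is the nontrivial result the paper imports from \cite{FG}. Your fallback, the characterisation via $\tau$-sized sets all of whose infinite subsets are unbounded, merely restates the goal.
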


\begin{corollary}\label{compact}
Let $X$ be an infinite compact space of weight $\tau$. Then we have
\[\Ne_e(F(X))=_T \Ne_e(A(X)) =_T ([\tau]^{<\omega})^\omega.\]
\end{corollary}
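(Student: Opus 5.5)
The plan is to combine Theorem~\ref{main}, Lemma~\ref{3eq} and Lemma~\ref{FG}, applied to the unique uniformity $\U$ of the compact space $X$.

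First, recall that a compact Hausdorff space $X$ carries a unique compatible uniformity $\U$, consisting of all neighbourhoods of the diagonal. It coincides with the fine uniformity $\U_X$. Consequently $F(X)\cong F(X,\U)$ and $A(X)\cong A(X,\U)$ as topological groups, by the remarks in the subsection on free topological groups over uniform spaces.

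Second, the uniform space $(X,\U)$ is precompact. Since $X$ is compact, every cover $\{B(x,U):x\in X\}$ has a finite subcover. Its weight equals the topological weight $\tau$ of $X$, because for compact spaces $w(\U)=w(X)$; this is a standard fact and I would cite it. Since $X$ is infinite and compact, it is not discrete, so $(X,\U)$ is non-discrete, and $\tau$ is infinite. By Lemma~\ref{FG}, $\U=_T[\tau]^{<\omega}$.

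Third, I would verify the hypothesis of Theorem~\ref{main}. Because $X$ is non-discrete, $\add(\U)\ge\omega$, and precompactness means every $U$ admits a finite $A$ with $|A|<\omega\le\add(\U)$. Hence $(X,\U)$ is $\add(\U)$-precompact. Theorem~\ref{main} then gives $\Ne_e(F(X))=_T\U^\omega$, and Lemma~\ref{3eq} gives $\Ne_e(A(X))=_T\U^\omega$.

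Finally, I would pass the Tukey equivalence $\U=_T[\tau]^{<\omega}$ to countable powers. This uses the fact that Tukey equivalence is preserved by coordinatewise products: cofinal maps $\varphi:Q\to P$ induce cofinal maps $Q^\omega\to P^\omega$ coordinatewise, since a cofinal subset of a product projects onto cofinal subsets coordinatewise. This is the same step as in the proof of Theorem~\ref{main}, where $\Gamma^\omega=_T\U^\omega$. Combining everything yields the stated chain of equivalences with $([\tau]^{<\omega})^\omega$. The only point needing care is the identification $w(\U)=w(X)$ together with $F(X)=F(X,\U)$; the rest is direct invocation.
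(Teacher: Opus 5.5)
Your proposal is the paper's proof. You take the unique uniformity $\U$, note that $w(\U)=\tau$, $F(X)\cong F(X,\U)$ and $A(X)\cong A(X,\U)$, check that $(X,\U)$ is non-discrete and precompact (hence $\add(\U)$-precompact), and combine Theorem~\ref{main} with Lemmas~\ref{3eq} and~\ref{FG}.

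The one thing to repair is your stated justification for passing $\U=_T[\tau]^{<\omega}$ to countable powers. The conclusion is true, and the paper takes it for granted. But knowing that each projection of $\varphi^\omega(C)$ is cofinal does not make $\varphi^\omega(C)$ cofinal in $P^\omega$. For example, the sequences in $([\tau]^{<\omega})^\omega$ that are empty in all but one coordinate have every projection cofinal, yet none dominates the constant sequence $(\{0\})_{n\in\omega}$. Any of the following arguments works instead:
\begin{itemize}
\item[(a)] Use unbounded maps. Coordinatewise reasoning is valid there, because a subset of a product is bounded if and only if all its projections are bounded, so $\psi^\omega$ is unbounded whenever $\psi$ is.
\item[(b)] Use the characterization that $\varphi:Q\to P$ is cofinal if and only if every $p\in P$ has some $q_p\in Q$ with $\varphi(q)\ge p$ for all $q\ge q_p$. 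This property visibly passes to $\varphi^\omega$.
\item[(c)] Note that both $\U$ and $[\tau]^{<\omega}$ are Dedekind complete. The reductions are then witnessed by order-preserving maps with cofinal image, and their coordinatewise powers are again order-preserving with cofinal image.
\end{itemize}
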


\begin{proof} Let $\U$ be the unique compatible uniformity. Then $w(\U)=\tau$, and
$F(X)\cong F(X,\U)$ and $A(X)\cong A(X,\U)$. Since $X$ is infinite and compact, it is non-discrete and $\kappa$-precompact for every infinite cardinal $\kappa$, so Theorem~\ref{main} applies. Together with Lemmas~\ref{3eq} and~\ref{FG}, it gives
$
\Ne_e(F(X))=_T\Ne_e(A(X))=_T\U^\omega=_T([\tau]^{<\omega})^\omega,
$
as required.
\end{proof}

Before stating the next corollary, we briefly recall Shelah's Strong Hypothesis ({\bf SSH}) \cite{Sh}. Arising from pcf theory, {\bf SSH} asserts that the pseudo-power $pp(\lambda) = \lambda^+$ for every singular cardinal $\lambda$.

The character consequences below are already contained in the earlier computations of Nickolas--Tkachenko and Chiș--Ferrer--Hernández--Tsaban. More precisely, writing $\operatorname{Fin}(\tau)=[\tau]^{<\omega}$, \cite[Proposition 4.15]{CFHT} proves
\[
\cof\big(\operatorname{Fin}(\tau)^\omega\big)
=
\mathfrak d\cdot\cof([\tau]^{\aleph_0}),
\]
and \cite[Theorem 8.7]{CFHT} evaluates $\cof([\tau]^{\aleph_0})$ under \textbf{SSH}. Thus the following lemma is only a convenient restatement of those known cardinal computations.

\begin{lemma}\label{cofp}\cite[Proposition 4.15 and Theorem 8.7]{CFHT} Assume \textbf{SSH}. Let $\tau>\omega$. Then
\begin{itemize}
    \item $\cof\big(([\tau]^{<\omega})^\omega\big) = \mathfrak{d}\cdot \tau$, if $\cof(\tau)>\omega$;
    \item $\cof\big(([\tau]^{<\omega})^\omega\big) = \mathfrak{d}\cdot \tau^+$, if $\cof(\tau)=\omega$.
\end{itemize}
\end{lemma}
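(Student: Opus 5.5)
The plan is to obtain the two equalities from two separate inputs: the formula $\cof(\operatorname{Fin}(\tau)^\omega)=\mathfrak d\cdot\cof([\tau]^{\aleph_0})$ of \cite[Proposition 4.15]{CFHT}, and the computation of $\cof([\tau]^{\aleph_0})$ under \textbf{SSH} from \cite[Theorem 8.7]{CFHT}.

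The first step is to establish the product formula, or at least to sketch why it holds. A cofinal subset of $([\tau]^{<\omega})^\omega$ can be built as follows. Fix a cofinal family $\mathcal C\subseteq[\tau]^{\aleph_0}$ of size $\cof([\tau]^{\aleph_0})$. For each $C\in\mathcal C$, fix an increasing enumeration of $C$ by finite sets, $C=\bigcup_k F^C_k$, and a dominating family $D\subseteq\omega^\omega$ of size $\mathfrak d$. The candidate cofinal family is the set of sequences $n\mapsto F^C_{g(n)}$ for $C\in\mathcal C$ and $g\in D$. Given an arbitrary $x\in([\tau]^{<\omega})^\omega$, the countable set $\bigcup_n x(n)$ lies in some $C\in\mathcal C$. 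Then each $x(n)\subseteq F^C_{h(n)}$ for some $h\in\omega^\omega$, and $h\le g$ for some $g\in D$. One point needs care: $D$ should dominate everywhere rather than mod finite. This is harmless, since closing $D$ under finite modifications preserves its size $\mathfrak d$.

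For the reverse inequality, I would observe the following two facts.
\begin{itemize}
\item Projecting a cofinal family through the map $x\mapsto\bigcup_n x(n)$ gives a cofinal subset of $[\tau]^{\le\aleph_0}$, which yields the $\cof([\tau]^{\aleph_0})$ lower bound.
\item The Tukey relation $\omega^\omega\le_T([\tau]^{<\omega})^\omega$, coming from $\omega\le_T[\tau]^{<\omega}$, yields the $\mathfrak d$ lower bound.
\end{itemize}

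The second step is to evaluate $\cof([\tau]^{\aleph_0})$ under \textbf{SSH}: it equals $\tau$ when $\cof(\tau)>\omega$ and $\tau^+$ when $\cof(\tau)=\omega$. The lower bounds are classical. Trivially $\cof([\tau]^{\aleph_0})\ge\tau$. When $\cof(\tau)=\omega$, a König-type diagonalization gives $\cof([\tau]^{\aleph_0})>\tau$: given $\tau$ many countable sets indexed along a cofinal $\omega$-sequence of cardinals, one picks a countable set escaping all of them.

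For the upper bound I would induct on $\tau$.
\begin{itemize}
\item For a successor $\tau=\mu^+$, use $\cof([\mu^+]^{\aleph_0})\le\mu^+\cdot\cof([\mu]^{\aleph_0})$, since every countable subset of $\mu^+$ is bounded below $\mu^+$.
\item For a limit of uncountable cofinality, the same boundedness argument gives $\cof([\tau]^{\aleph_0})=\sup_{\mu<\tau}\cof([\mu]^{\aleph_0})$.
\item For a singular $\tau$ of countable cofinality, Shelah's theorem gives $\cof([\tau]^{\aleph_0})=pp(\tau)$ in this context, and \textbf{SSH} then gives the value $\tau^+$.
\end{itemize}
Combining these cases with the product formula yields the two displayed values $\mathfrak d\cdot\tau$ and $\mathfrak d\cdot\tau^+$.

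The main obstacle is the singular countable-cofinality case. It relies on the deep pcf identification $\cof([\tau]^{\aleph_0},\subseteq)=pp(\tau)$ for singular $\tau$ of countable cofinality, where $\tau$ must be below the first fixed point or handled via Shelah's cov-vs-pp theorem. I would not reprove this; I would cite \cite[Theorem 8.7]{CFHT} for it, as the statement indicates, and treat the remainder as the routine cardinal arithmetic described above.
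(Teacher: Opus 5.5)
Your proposal is correct and follows the same route as the paper, which simply combines \cite[Proposition 4.15]{CFHT}, giving $\cof\big(([\tau]^{<\omega})^\omega\big)=\mathfrak d\cdot\cof([\tau]^{\aleph_0})$, with the \textbf{SSH} evaluation of $\cof([\tau]^{\aleph_0})$ in \cite[Theorem 8.7]{CFHT}. Your added sketches of the product formula and of the ZFC bounds and induction for $\cof([\tau]^{\aleph_0})$ are sound, and, like the paper, you rightly defer the singular countable-cofinality case (the pcf step $\cof([\tau]^{\aleph_0})=pp(\tau)=\tau^+$) to the cited theorem.
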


The next corollary is included for completeness; its character formulas are not new. Indeed, \cite[Theorems 2.9 and 3.15]{NT} compute the relevant characters of $A(X)$ and $F(X)$ for compact spaces, while the required pcf evaluation follows from \cite[Proposition 4.15 and Theorem 8.7]{CFHT}. The new contribution of the present paper is instead the Tukey-level statement in Corollary~\ref{compact}, which identifies the full cofinal types of the neighborhood filters rather than merely their cofinalities.

\begin{corollary} Assume \textbf{SSH}. Let $X$ be an infinite compact space of weight $\tau$. Then
\begin{itemize}
    \item $\chi(F(X)) = \chi(A(X)) = \mathfrak{d}\cdot \tau$, if $\cof(\tau)>\omega$;
    \item $\chi(F(X)) = \chi(A(X))  = \mathfrak{d}\cdot \tau^+$, if $\cof(\tau)=\omega$.
\end{itemize}
\end{corollary}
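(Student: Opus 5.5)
The plan is to reduce the statement to two facts already available: Corollary~\ref{compact}, which identifies the cofinal type of the neighbourhood filters, and Lemma~\ref{cofp}, which computes the relevant cofinality under \textbf{SSH}. The character of a topological group is the cofinality of its neighbourhood filter at the identity. So, since $X$ is infinite and compact, $F(X)$ and $A(X)$ are non-discrete, and it suffices to compute $\cof(\Ne_e(F(X)))$ and $\cof(\Ne_e(A(X)))$.

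\emph{Step 1 (cofinal types).} By Corollary~\ref{compact},
\[
\Ne_e(F(X))=_T\Ne_e(A(X))=_T([\tau]^{<\omega})^\omega .
\]

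\emph{Step 2 (cofinality is a Tukey invariant).} I will use the standard fact that Tukey equivalent directed sets have the same cofinality. If $\varphi:Q\to P$ is cofinal and $B$ is cofinal in $Q$, then $\varphi(B)$ is cofinal in $P$, so $\cof(P)\le\cof(Q)$. Alternatively this follows from Example~\ref{ex}(ii). Step 1 then gives
\[
\chi(F(X))=\chi(A(X))=\cof\bigl(([\tau]^{<\omega})^\omega\bigr).
\]

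\emph{Step 3 (evaluating the cofinality).} If $\tau>\omega$, Lemma~\ref{cofp} gives the two displayed values directly, according to whether $\cof(\tau)>\omega$ or $\cof(\tau)=\omega$. The only point needing attention is the case $\tau=\omega$, i.e.\ $X$ infinite compact metrizable, which is not covered by Lemma~\ref{cofp}. Since $\cof(\omega)=\omega$, the statement then claims the value $\mathfrak d\cdot\omega_1=\mathfrak d$.

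To handle $\tau=\omega$, note that $[\omega]^{<\omega}=_T\omega$, because $\{\{0,\dots,n\}:n\in\omega\}$ is a cofinal chain. Hence $([\omega]^{<\omega})^\omega=_T\omega^\omega$, whose cofinality under the pointwise order is $\mathfrak d$. This agrees with the Nickolas--Tkachenko theorem recalled above. So $\chi=\mathfrak d=\mathfrak d\cdot\omega^+$ in this case as well.

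I expect no genuine obstacle. The only subtlety is this boundary case $\tau=\omega$, which must be checked separately as above rather than read off from Lemma~\ref{cofp}.
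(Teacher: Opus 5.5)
Your proposal is correct and follows the paper's proof exactly: you reduce via Corollary~\ref{compact} to $\cof\bigl(([\tau]^{<\omega})^\omega\bigr)$, apply Lemma~\ref{cofp} when $\tau>\omega$, and treat $\tau=\omega$ separately via $([\omega]^{<\omega})^\omega=_T\omega^\omega$, whose cofinality is $\mathfrak d=\mathfrak d\cdot\omega_1$. You also justify explicitly that cofinality is a Tukey invariant, a step the paper leaves implicit.
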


\begin{proof}
By Corollary~\ref{compact}, both characters are equal to
$
\cof\big(([\tau]^{<\omega})^\omega\big).
$
For $\tau>\omega$, the conclusion follows from Lemma~\ref{cofp}. If $\tau=\omega$, then $[\omega]^{<\omega}=_T\omega$, and hence $([\omega]^{<\omega})^\omega=_T\omega^\omega$. Its cofinality is $\mathfrak d$, which also equals $\mathfrak d\cdot\omega_1$.
\end{proof}

	\begin{corollary}\label{SSH}
		Assume \textbf{SSH}. Suppose that $(X,\U)$ is an infinite precompact uniform space of weight $\tau$. If $\tau\geq \mathfrak{d}$ and $\cof(\tau)>\omega$, then
		\[
		\Ne_e(F(X,\U))
		=_T
		\Ne_e(A(X,\U))
		=_T
		[\tau]^{<\omega}.
		\]
	\end{corollary}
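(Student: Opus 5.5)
Since $(X,\U)$ is infinite and precompact, it is $\kappa$-precompact for every infinite $\kappa$, in particular $\add(\U)$-precompact. It is also non-discrete: a discrete infinite uniform space has $\Delta\in\U$, and then $\Delta$-balls are singletons, so $X$ cannot be covered by finitely many of them. Theorem~\ref{main} therefore applies and yields $\Ne_e(F(X,\U))=_T\U^\omega$. Lemma~\ref{3eq} gives $\Ne_e(A(X,\U))=_T\U^\omega$ as well, and by Lemma~\ref{FG} we have $\U=_T[\tau]^{<\omega}$. Consequently both neighbourhood filters are Tukey equivalent to $([\tau]^{<\omega})^\omega$. It remains only to show that
\[
([\tau]^{<\omega})^\omega=_T[\tau]^{<\omega}
\]
under the stated hypotheses.

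For the lower bound, $[\tau]^{<\omega}\le_T([\tau]^{<\omega})^\omega$ holds because the projection onto the first coordinate is a cofinal map; equivalently, a factor is always Tukey below the product. For the upper bound, I will use Example~\ref{ex}(v). Here $\tau\ge\mathfrak d\ge\omega_1$, so $\tau$ is uncountable, and any directed set of cofinality at most $\tau$ is Tukey below $[\tau]^{<\omega}$. Thus it suffices to check that $\cof\big(([\tau]^{<\omega})^\omega\big)\le\tau$.

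The cofinality computation is the only step where \textbf{SSH} enters. Since $\tau>\omega$ and $\cof(\tau)>\omega$, Lemma~\ref{cofp} gives $\cof\big(([\tau]^{<\omega})^\omega\big)=\mathfrak d\cdot\tau$. Because $\tau\ge\mathfrak d$, this product equals $\tau$. Hence $([\tau]^{<\omega})^\omega\le_T[\tau]^{<\omega}$, and together with the lower bound we get the desired equivalence.

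I do not expect any real obstacle, since the substantive work is already contained in Theorem~\ref{main}, Lemma~\ref{FG} and the pcf computation in Lemma~\ref{cofp}. The only points to verify carefully are the applicability of Theorem~\ref{main}, namely non-discreteness and $\add(\U)$-precompactness, and that the condition $\tau\ge\mathfrak d$ is exactly what collapses $\mathfrak d\cdot\tau$ to $\tau$.
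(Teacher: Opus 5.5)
Your proposal is correct and follows the paper's proof essentially step for step. Theorem~\ref{main}, Lemma~\ref{3eq} and Lemma~\ref{FG} reduce both filters to $([\tau]^{<\omega})^\omega$, and Lemma~\ref{cofp} with $\tau\ge\mathfrak d$ gives cofinality $\tau$, so Example~\ref{ex}(v) yields the upper bound while the reverse reduction is trivial. Your explicit check of non-discreteness and $\add(\U)$-precompactness is a small detail the paper leaves implicit.
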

	
	\begin{proof}
		According to Lemma \ref{FG}, every precompact uniform space $(X, \U)$ satisfies
		\[
		\U=_T[\tau]^{<\omega}.
		\]
		Hence Theorem~\ref{main} and Lemma~\ref{3eq} yield
		\[
		\Ne_e(F(X,\U))
		=_T
		\Ne_e(A(X,\U))
		=_T
		([\tau]^{<\omega})^\omega.
		\]
		
		It remains to observe that, by Lemma \ref{cofp}, if $\tau\geq \mathfrak{d}$ and
		$
		\cof(\tau)>\omega
		$
		then
		\[
		\cof\big(([\tau]^{<\omega})^\omega\big)=\tau.
		\]
		Therefore, by (v) of Example \ref{ex},
	$
		([\tau]^{<\omega})^\omega\le_T [\tau]^{<\omega},
	$
		while the reverse Tukey reduction is trivial. Hence
		\[
		([\tau]^{<\omega})^\omega =_T [\tau]^{<\omega}. ~~~\qedhere
		\]
	\end{proof}

Recall that a uniform space $(X, \U)$ is called a \emph{uniform $P$-space} if the intersection of every countable family of entourages in $\U$ belongs to $\U$. For a non-discrete uniform space, this is equivalent to $\add(\U)>\omega$. A discrete uniform space is also a uniform $P$-space, but in that case $\add(\U)=1$.
	\begin{corollary} If $(X, \U)$ is an $\omega$-narrow uniform $P$-space, then \[
		\Ne_e(F(X,\U))
		=_T
		\Ne_e(A(X,\U))
		=_T
		\U.
		\]
	\end{corollary}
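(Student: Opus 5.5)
Our plan is to split according to whether $(X,\U)$ is discrete, and in the non-discrete case to reduce the statement to Theorem~\ref{main} together with Lemma~\ref{3eq} and Example~\ref{ex}(vi).

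\emph{Discrete case.} Then $\Delta\in\U$ is a largest element, so $\U=_T1$ and $\cof(\U)=1$. Both $F(X,\U)$ and $A(X,\U)$ are discrete: the identity map of $X$ into the discrete group $F_a(X)$ (resp.\ the discrete free Abelian group) is uniformly continuous, and so it extends to a continuous homomorphism. Hence all three posets are Tukey equivalent to $1$.

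\emph{Non-discrete case.} By the remark preceding the statement, $\add(\U)>\omega$, so $\add(\U)\ge\omega_1$. Since $(X,\U)$ is $\omega$-narrow, it is $\omega_1$-precompact. A cover by fewer than $\omega_1$ balls is also a cover by fewer than $\add(\U)$ balls, so $(X,\U)$ is $\add(\U)$-precompact. Theorem~\ref{main} therefore applies and gives
\[
\Ne_e(F(X,\U))=_T\U^\omega .
\]
Lemma~\ref{3eq} gives
\[
\Ne_e(A(X,\U))=_T\U^\omega .
\]
Finally, since $\omega<\add(\U)$, Example~\ref{ex}(vi) yields $\U^\omega=_T\U$. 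Combining these three equivalences gives the claim.

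There is no real obstacle here. The only point requiring care is checking that the hypotheses of Theorem~\ref{main} hold: non-discreteness, and the passage from $\omega$-narrowness to $\add(\U)$-precompactness. That passage uses exactly the inequality $\add(\U)\ge\omega_1$ supplied by the uniform $P$-space assumption.
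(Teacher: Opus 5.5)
Your proposal is correct and matches the paper's proof: the discrete case is trivial, and in the non-discrete case $\add(\U)>\omega$ turns $\omega$-narrowness into $\add(\U)$-precompactness, so Theorem~\ref{main}, Lemma~\ref{3eq} and Example~\ref{ex}(vi) together give the result. Your extra justification that $F(X,\U)$ and $A(X,\U)$ are discrete, via extending the identity map into the discrete free group, fills in a step the paper only asserts.
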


	\begin{proof}
		If $X$ is discrete, then $F(X,\U)$ and $A(X,\U)$ are discrete and $\U=_T1$, so the assertion is immediate. Suppose that $X$ is non-discrete. Then $\add(\U)>\omega$. Since $(X,\U)$ is $\omega$-narrow, it is $\add(\U)$-precompact, and Theorem~\ref{main} gives $\Ne_e(F(X,\U))=_T\U^\omega$. Lemma~\ref{3eq} gives the same equivalence for $A(X,\U)$, while Example~\ref{ex}(vi) yields $\U^\omega=_T\U$.
	\end{proof}
	
	\begin{remark}
		We note that in their second paper \cite{NT2}, Nickolas and Tkachenko proved that for every infinite compact space $X$ of weight $\tau$,
		\[
		\chi(F(X))=\chi(A(X))=\mathfrak d\cdot \cof([\tau]^{<\omega_1}).
		\]
		
		One might ask whether this cardinal equivalence can be lifted to the Tukey equivalence
		\[
		\Ne_e(F(X))=_T \omega^\omega\times [\tau]^{<\omega_1}.
		\]
		We shall see that this can never happen when $\tau$ is uncountable.
		
	By Corollary~\ref{compact} it suffices to show that
		\[
		([\tau]^{<\omega})^\omega \nleq_T \omega^\omega\times [\tau]^{<\omega_1}.
		\]
		
		\begin{claim}\label{countub}
			Every unbounded subset of $([\tau]^{<\omega})^\omega$ contains a countable unbounded subset.
		\end{claim}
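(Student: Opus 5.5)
Let $A\subseteq([\tau]^{<\omega})^\omega$ be unbounded, with the coordinatewise order, where each coordinate is ordered by inclusion. The plan is to use the fact that $[\tau]^{<\omega}$ has a simple boundedness criterion: a subset $B\subseteq[\tau]^{<\omega}$ is bounded if and only if $\bigcup B$ is finite. For the product, a set $A$ is bounded if and only if it is bounded in every coordinate. This holds because a bound in each coordinate $n$ gives an element $F_n$, and the sequence $(F_n)_{n}$ then bounds $A$.

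Since $A$ is unbounded, there is some $n\in\omega$ such that the projection $A_n=\{x(n):x\in A\}$ is unbounded in $[\tau]^{<\omega}$. By the criterion, $\bigcup A_n$ is infinite. We then choose recursively $x_k\in A$ for $k\in\omega$ so that each $x_k(n)$ contains an element of $\tau$ not lying in $\bigcup_{j<k}x_j(n)$. This is possible because $\bigcup_{j<k}x_j(n)$ is finite while $\bigcup A_n$ is infinite. Hence $\bigcup_k x_k(n)$ is infinite.

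Consequently $\{x_k(n):k\in\omega\}$ is unbounded in $[\tau]^{<\omega}$, and so $\{x_k:k\in\omega\}$ is a countable unbounded subset of $A$. There is no real obstacle in this argument. The only point needing care is the reduction of unboundedness in the countable power to unboundedness in a single coordinate. This reduction is immediate from the definition of the coordinatewise order, so it requires no use of $\add$ or of any earlier lemma.

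In the remark, the claim will then be combined with the observation that $[\tau]^{<\omega_1}$ is $\sigma$-directed, i.e.\ every countable subset is bounded. Therefore unbounded maps into $\omega^\omega\times[\tau]^{<\omega_1}$ cannot exist when $\tau$ is uncountable, but that argument lies beyond the present claim.
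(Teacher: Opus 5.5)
Your proof is correct and follows the paper's argument. You choose a coordinate $n$ where the projection $A(n)$ is unbounded in $[\tau]^{<\omega}$, then extract countably many members of $A$ whose $n$-th coordinates stay unbounded, making the extraction explicit via the criterion that a subset of $[\tau]^{<\omega}$ is bounded iff its union is finite.

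Your closing aside is slightly overstated: the claim plus $\sigma$-directedness of $[\tau]^{<\omega_1}$ only controls the second factor, and the paper also uses $([\tau]^{<\omega})^\omega\nleq_T\omega^\omega$. This does not affect the claim itself.
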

		
		\begin{proof}[Proof of Claim~\ref{countub}]
			Let $A\subseteq ([\tau]^{<\omega})^\omega$ be unbounded. Then for some $n\in\omega$, the set
			\[
			A(n)=\{a(n):a\in A\}
			\]
			is unbounded in $[\tau]^{<\omega}$. Since every infinite subset of $[\tau]^{<\omega}$ contains a countable unbounded subset, one can choose a countable $B\subseteq A$ such that
			\[
			B(n)=\{b(n):b\in B\}
			\]
			remains unbounded. Hence $B$ is unbounded in $([\tau]^{<\omega})^\omega$.
		\end{proof}

		We also need the fact that
		$
		([\tau]^{<\omega})^\omega\nleq_T\omega^\omega.
		$
		For each $\alpha<\omega_1$, let $a_\alpha\in([\tau]^{<\omega})^\omega$ be given by $a_\alpha(0)=\{\alpha\}$ and $a_\alpha(n)=\emptyset$ for $n>0$. Every infinite subset of $\{a_\alpha:\alpha<\omega_1\}$ is unbounded. On the other hand, every uncountable subset of $\omega^\omega$ contains a countably infinite bounded subset. Indeed, one can recursively choose nested uncountable families whose members agree on longer and longer initial segments, and then select one distinct function from each family. Now let $\psi:([\tau]^{<\omega})^\omega\to\omega^\omega$. If $\psi(\{a_\alpha:\alpha<\omega_1\})$ is countable, some fiber of $\psi$ is uncountable; if it is uncountable, apply the preceding observation to its image. In either case, some infinite subset of $\{a_\alpha:\alpha<\omega_1\}$ has bounded image. Thus no such $\psi$ is an unbounded map, proving the assertion.
		
		Now let
		\[
		\varphi=(\varphi_1, \varphi_2):
		([\tau]^{<\omega})^\omega\to
		\omega^\omega\times [\tau]^{<\omega_1}
		\]
		be any map.
		Since
		$
		([\tau]^{<\omega})^\omega\nleq_T \omega^\omega,
		$
		the map $\varphi_1$ does not send unbounded sets to unbounded sets. Thus there exists an unbounded set $A$ such that $\varphi_1(A)$ is bounded in $\omega^\omega$. By Claim~\ref{countub}, passing to a subset if necessary, we may assume $A$ is countable.
		
		Since $\add([\tau]^{<\omega_1})=\omega_1$, the countable set $\varphi_2(A)$ is bounded in $[\tau]^{<\omega_1}$. Hence $\varphi(A)$ is bounded in
		$
		\omega^\omega\times [\tau]^{<\omega_1},
		$
		showing that $\varphi$ cannot be a Tukey map. Therefore
		\[
		([\tau]^{<\omega})^\omega \nleq_T \omega^\omega\times [\tau]^{<\omega_1},
		\]
		and so
		\[
		\Ne_e(F(X))\neq_T \omega^\omega\times [\tau]^{<\omega_1}.
		\]
		
		On the other hand, the reverse reduction
		$
		\omega^\omega\times [\tau]^{<\omega_1}\le_T ([\tau]^{<\omega})^\omega
		$
		always holds. Indeed, the map
		\[
		f\longmapsto\left(\bigl(|f(n)|\bigr)_{n\in\omega},
		\bigcup_{n\in\omega}f(n)\right)
		\]
		from $([\tau]^{<\omega})^\omega$ to $\omega^\omega\times[\tau]^{<\omega_1}$ is order-preserving and has cofinal image. To see cofinality, given $g\in\omega^\omega$ and $A\in[\tau]^{<\omega_1}$, choose finite sets $f(n)$ such that $|f(n)|\geq g(n)$ and $A\subseteq\bigcup_{n\in\omega}f(n)$.
	\end{remark}

	\section{Final Remarks and Open Problems}
	
	We close by returning to two issues that remain unresolved. The first concerns Question~\ref{MainQ}. Since $A(X,\U)$ is a quotient group of $F(X,\U)$, Lemma~\ref{3eq} always gives
	$
	\U^\omega\le_T\Ne_e(F(X,\U)).
	$
	Thus only the reverse Tukey reduction is in question. If $X$ is discrete, the desired equivalence is immediate. For non-discrete spaces, Theorem~\ref{main} gives the reverse reduction whenever $(X,\U)$ is $\add(\U)$-precompact. In particular, it settles Question~\ref{MainQ} for every precompact uniform space and for every non-discrete $\omega$-narrow uniform space with $\add(\U)>\omega$. The remaining case is therefore that of non-discrete, non-precompact $\omega$-narrow spaces with $\add(\U)=\omega$.
	
	The proof of \cite[Theorem 3.10]{NT} passes through the character estimate
	$
	\chi(F(X,\U))\leq\mathfrak d\cdot\cof(\U^\omega).
	$
	This suggests a Tukey upper bound which is weaker than the equality in Question~\ref{MainQ}.
	
	\begin{question}\label{Q:parametric}
		If $(X,\U)$ is $\omega$-narrow, must
		$
		\Ne_e(F(X,\U))\le_T\omega^\omega\times\U^\omega
		$
		hold?
	\end{question}
	
	The cofinality of the poset on the right is $\mathfrak d\cdot\cof(\U^\omega)$, so this question is a direct Tukey counterpart of the preceding estimate. Moreover, whenever $\omega^\omega\le_T\U^\omega$, a positive answer would imply
	$
	\Ne_e(F(X,\U))=_T\U^\omega
	$
	and hence give a positive answer to Question~\ref{MainQ} in that case.
	
	The second issue comes from the fineness index. Questions~\ref{Q1} and~\ref{Q:f(P)} ask whether $\f(P)$ is always realized as the character of a non-metrizable topological group with a $P$-base. Since $\f(P)$ is regular, the construction in Proposition~\ref{poset2group} shows that this is equivalent to
	$
	\f(P)\le_T P.
	$
	Proposition~\ref{finepower} gives $\f(P)\le_T P^\omega$ for every directed poset $P$. Consequently, the answer is positive whenever $P^\omega=_T P$, including the classical case $P=\omega^\omega$. In view of Lemma~\ref{cont} and the countable-cofinality cases, the unresolved case is $\add(P)=\omega<\cof(P)$; it remains open whether the passage to the countable power is necessary there.

\section*{Acknowledgements}
We would like to thank Dr. Xuewei Ling for introducing the notion of topological
groups with an $\omega^\omega$-base, which served as the starting point of this work.
We are also grateful to Prof. Wei He for many valuable suggestions and helpful
comments throughout this research.
We thank the anonymous referee for a careful reading of an earlier version and for
pointing out several errors and gaps. The referee's comments led to substantial
improvements in both the correctness and the presentation of the paper.
This work was supported by NSFC Grants 12301089 and 12271258.

\section*{Use of Generative AI}
During the preparation of the revised manuscript, the authors used OpenAI's GPT to
assist in identifying possible gaps, checking the consistency of arguments, and
improving the exposition and language. In particular, GPT suggested the level-by-level
formulation used in the revised proof of Theorem~\ref{main}. The authors subsequently
rewrote the argument in their own notation and independently verified every
mathematical step. The authors take full responsibility for all content of the paper.

\end{document}